\newtheorem{theorem}{Theorem}[section]
\newtheorem{lemma}[theorem]{Lemma}
\newtheorem{definition}[theorem]{Definition}
\newtheorem{proposition}[theorem]{Proposition}
 \theoremstyle{remark}
\numberwithin{equation}{section}
\def\cen{\centerline}
\def\n{\noindent}
\def\al{\alpha}
\def\va{\varphi}
\def\ep{\varepsilon}
\def\C{\mathbb C}
\def\R{\mathbb R}
\def\F{\mathcal F}
\def\ov{\overline}
\def\pa{\partial}
\def\psh{plurisubharmonic}
\def\ve{\varepsilon}
\def\de{\delta}
\def\nhd{neighborhood}
\def\De{\Delta}
\begin{document}
\title {Approximation  of  plurisubharmonic functions\\
 on  complex varieties}
\author{Nguyen Quang Dieu, Tang Van Long and Sanphet Ounheuan }
\address{Department of Mathematics, Hanoi National University of Education,
136 Xuan Thuy street, Cau Giay, Hanoi, Vietnam}
\email{  ngquangdieu@hn.vnn.vn, tangvan.long@gmail.com, sanphetMA@gmail.com }

\subjclass[2000]{Primary 32U15; Secondary 32B15}


\date{\today}


\maketitle
\section{Introduction}
Let $V$ be a complex variety in a domain $D \subset  \mathbb C^ n (n  \ge 2)$ i.e, $V$ is a closed subset of $D$ and for every $z_0 \in V$, there exists a \nhd\ $U$ of $z_0$ such that $U \cap V$ is the common zero set of
holomorphic functions on $U.$
Let $PSH (V)$ denote the cone of \psh\ functions on $V.$
Recall  that a  function  $u:  V \to [-\infty, \infty)$ is  \psh\  if  $u$ is locally the restriction (on $V$) of  a \psh\ functions on an open subset of $D$. Notice that we regard the function identically $-\infty$ as plurisubharmonic.
A fundamental result of Fornaess and Narasimhan (cf. Theorem 5.3.1 in [FN]) asserts that an upper semicontinuous function $u: V \to \R \cup [-\infty, \infty)$ is \psh\  if and only if for every holomorphic map $\theta: \De \to V$, where $\De$ is the unit disk in $\C$, we have $u \circ \theta$ is subharmonic on $\De.$ This powerful result implies immediately the nontrivial facts that  plurisubharmonicity is preserved under uniform convergence.

We write $PSH (V)$ for the set of \psh\  functions on $V$. In this paper, we address the question of approximating elements of $PSH (V)$ which are bounded from above by \psh\  functions on $V$ which are continuous on $V$
or on $\ov {V}$. After subtracting constants, we need only consider functions in $PSH^{-} (V)$ the convex cone of  \psh\  functions $u$ on $V$ such that the upper semicontinuous regularization
$$u^* (\xi): =\varlimsup\limits_{z \to \xi, z \in V} u(z)<0,  \ \forall \xi \in \partial V.$$
Our  main tool is is a duality theorem of Edwards which  expresses upper envelopes of plurisubharmonic functions  taken in convex sub-cones of $PSH(V)$
in terms of  Jensen measures with respect to these cones.
This approach has been used  [Wik], [DW] and [Di]  in the case where $V$ is a bounded domain in $\C^n$.
The general principle is that the approximation of elements in a cone $\mathcal A \subset PSH^{-} (V)$ by elements in a smaller cone $\mathcal B$ is possible when we have equality of the set of Jensen measures with respect to $\mathcal A$ and $\mathcal B.$
Nevertheless, in our setting, there are at least two difficulties, first the standard local smoothing by convolving with approximate identities is not possible on the complex variety $V$, the second one stems from the fact that (the upper
semicontinuous regularization of) the supremum of a family of plurisubharmonic functions which are locally uniformly bounded from above is not necessarily plurisubharmonic (cf. Example 1.4 in [Ze]).
Therefore, we have to make extra conditions to overcome these obstacles. Namely, for the first one, the variety $V$ is assumed to be Stein i.e., there exists a \psh\ exhaustion  function on $V$, so that a global approximation
theorem of Fornaess and Narasimhan (cf. Theorem 5.5 in [FN]) is applicable (cf. Theorem \ref{thm2}), the other one is settled by putting some restriction on the part of $V$ which fails to be locally irreducible (cf. Theorem \ref{thm1}).

In order to formulate our results properly, it is convenient to introduce the following notions pertaining to our work.
\begin{definition}\label{def2}
Let $V$  be a  complex variety  in a domain  $D \subset  \C^n$. For  a  point  $  z \in \ov V$, we  define  two  classes  of  Jensen  measures
 $$
 \begin{aligned}
  &J_z = \{ \mu \in \mathcal{B} (\ov V): u(z) \le  \int\limits_{\ov V} ud\mu,  \forall  u \in PSH^{-}(V)\};\\
  &J^c_z = \{ \mu \in \mathcal{B} (\ov V): u(z) \le  \int\limits_{\ov V} ud\mu,  \forall  u \in PSH^{-}_c (V)\};
  \end{aligned}
 $$
 where $PSH^{-}_c (V)$ is the cone of negative continuous functions on $\ov{V}$  which is \psh\ on $V$ and
 $\mathcal{B} (\ov V)$ denotes  the  set of  Borel  probability  measures  with  support  on  $\ov V$.
\end{definition}
The connection between Jensen measures and approximation of plurisubharmonic functions stems from the following fact which is a simple consequence of Fatou's lemma.
\begin{proposition}\label{pro1}
Let $E$ be a subset of $V$ such that for every $u \in PSH^{-} (V)$, there exists a sequence $\{u_j\}_{j \ge 1} \subset PSH^{-}_c (V)$ having the following properties:

\n
(i) $u_j \to u$ pointwise on $E$.

\n
(ii) $\varlimsup\limits_{j \to \infty} v_j \le u$ on $V.$

Then $J_z=J^c_z$ for every $z \in E.$
\end{proposition}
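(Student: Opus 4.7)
The inclusion $J_z \subset J^c_z$ is immediate from the definitions, since $PSH^-_c(V) \subset PSH^-(V)$ makes the defining condition for $J_z$ a priori stronger than that for $J^c_z$. So the content of the proposition is the reverse inclusion $J^c_z \subset J_z$ for $z \in E$, and the plan is to obtain it by combining the hypothesis with the elementary reverse Fatou inequality for sequences bounded above.

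Fix $z \in E$, $\mu \in J^c_z$, and an arbitrary $u \in PSH^{-}(V)$. First I would invoke the hypothesis to produce a sequence $\{u_j\} \subset PSH^{-}_c(V)$ with $u_j(z) \to u(z)$ and $\varlimsup_j u_j \le u$ on $V$ (and, after extending by upper semicontinuous regularization on $\partial V$, on $\overline V$, where the extension is still non-positive by the defining property of $PSH^{-}$). Since each $u_j$ belongs to the test cone $PSH^{-}_c(V)$, the Jensen condition for $\mu \in J^c_z$ gives the family of inequalities
$$u_j(z) \le \int_{\overline V} u_j \, d\mu.$$

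Next I would pass to $\varlimsup_{j\to\infty}$ on both sides. The left-hand side converges to $u(z)$ by hypothesis (i). For the right-hand side, the sequence $\{u_j\}$ is uniformly bounded above by $0$, so the reverse Fatou lemma applies and yields
$$\varlimsup_{j\to\infty} \int_{\overline V} u_j \, d\mu \le \int_{\overline V} \varlimsup_{j\to\infty} u_j \, d\mu \le \int_{\overline V} u \, d\mu,$$
where the last step uses hypothesis (ii) together with monotonicity of the integral against the positive measure $\mu$. Chaining these estimates gives $u(z) \le \int_{\overline V} u \, d\mu$, which is exactly the condition for $\mu \in J_z$; since $u \in PSH^{-}(V)$ was arbitrary, this completes the inclusion.

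There is no deep obstacle; the only care needed is measure-theoretic bookkeeping, namely verifying that all relevant integrands are Borel measurable on $\overline V$ (the $u_j$ are continuous and $u$ is upper semicontinuous after the canonical extension to $\overline V$) and that the hypothesis $\varlimsup_j u_j \le u$, stated on $V$, can be propagated to $\overline V$ so that the reverse Fatou step is justified against a measure that may charge $\partial V$. This is handled by replacing $u$ with its upper semicontinuous regularization on $\overline V$, which is still non-positive on $\partial V$, and then the argument above goes through verbatim.
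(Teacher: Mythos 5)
Your proposal is correct and follows essentially the same route as the paper: fix $\mu \in J^c_z$, apply the Jensen inequality to the approximating sequence $\{u_j\} \subset PSH^{-}_c(V)$ furnished by the hypothesis, and pass to the limit using (i) on the left and the reverse Fatou lemma (for sequences bounded above by $0$) together with (ii) on the right. Your extra remarks about extending $u$ upper semicontinuously to $\ov V$ so that the integral against a measure charging $\partial V$ makes sense are a point the paper's own (terser) proof glosses over, but they do not change the argument.
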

In the opposite direction, the next result  gives a sufficient condition so that
pointwise approximation of negative \psh\ functions on complex varieties by continuous plurisubharmonic ones is possible.
Before formulating it, recall that a complex variety $V \subset D \subset \mathbb C^n$ is said to be {\it locally irreducible} if it is so at
 at every point $a \in V$ i.e., there is a  fundamental system of \nhd s $U_j$ of $a$ such that
$U_j \cap V$ is irreducible in $U_j$ for every $j.$ This point of local irreducibility was recorded incorrectly in Definition 1.1 of [Wik2] where it only requires a single \nhd\ $U$ of $a$ such that
$U \cap V$ is locally irreducible. In general, it is quite hard to decide whether $V$ is locally irreducible near its singular locus.
Under some restriction on the set of local irreducible points of $V$ we have the following result.
\begin{theorem}\label{thm1}Let $V$ be a complex variety in a bounded  domain $D \subset \C^n.$  Assume that there exists $\psi \in PSH^{-} (V), \psi \not \equiv -\infty$
satisfying the following conditions:

\n
(i) $F:=\{z \in V: \psi (z)=-\infty\}$ is a closed subset of $V.$

\n
(ii) $V \setminus F$ is a locally irreducible complex variety in $D \setminus F.$

Suppose that  there exists $E \subset V$ such that $J_z = J^c_z$ for all $z \in V \setminus E.$
Then for every $u \in PSH^{-} (V), u^*<0$, there exists two sequences $\{u_j\}_{j \ge 1} \subset PSH^{-} (V \setminus F)$ and $\{v_j\}_{j \ge 1} \subset  PSH^{-}_c (V)$
having the following properties:

\n
(a) $u_j \downarrow u$ on $V \setminus F$ and $u_j$ is continuous at every point of $V \setminus (E \cup F).$

\n
(b) $v_j \to u$ pointwise on $V\setminus (\ov E \cup F)$ and
$\varlimsup\limits_{j \to \infty} v_j \le u^*$ on $\ov V$.

\n
(c) Suppose in addition that  $V$ is of pure dimension $k, \ov {E}$ is pluripolar in $V$  and $u$ is  locally bounded on $V.$  Then the sequence $\{v_j\}_{j \ge 1}$ can be chosen to be locally uniformly bounded on $V$
and $(dd^c v_j)^k \to (dd^c u)^k$ on $U$ in the weak $^*-$topology as $j \to \infty$.
\end{theorem}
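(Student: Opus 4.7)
The plan is to apply Edwards' duality theorem successively to the cones $PSH^-(V)$ and $PSH^-_c(V)$, using the perturbation $v\mapsto v+\varepsilon\psi$ (extended by $-\infty$ on $F$) as a bridge between plurisubharmonic functions on $V\setminus F$ and those on $V$. First extend $u$ to $\ov V$ by setting $u(\xi):=u^*(\xi)<0$ for $\xi\in\partial V$, so that $u$ becomes usc on $\ov V$, and pick a decreasing sequence $\phi_j\in C(\ov V)$ with $\phi_j<0$ and $\phi_j\downarrow u$ on $\ov V$.

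For part (a), define on $V\setminus F$ the envelope
$$u_j(z):=\sup\{v(z):v\in PSH^-(V\setminus F),\ v\le\phi_j\},$$
so that hypothesis (ii) makes $u_j^*\in PSH^-(V\setminus F)$; since $u_j^*\le\phi_j$, this regularization lies in the defining family, forcing $u_j=u_j^*$ plurisubharmonic. The sandwich $u\le u_j\le\phi_j$ gives $u_j\downarrow u$. For continuity at $z_0\in V\setminus(E\cup F)$, the $\psi$-perturbation lets me replace the envelope over $PSH^-(V\setminus F)$ by the envelope over $PSH^-(V)$: given $v\in PSH^-(V\setminus F)$ with $v\le\phi_j$, the function $v+\varepsilon\psi$ (extended by $-\infty$ on $F$) lies in $PSH^-(V)$ and is $\le\phi_j$, and sending $\varepsilon\to 0^+$ recovers $v(z_0)$ since $\psi(z_0)$ is finite. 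Edwards' duality then identifies this envelope with $\inf_{\mu\in J_{z_0}}\int\phi_j\,d\mu$, and the assumption $J_{z_0}=J^c_{z_0}$ rewrites it as $\sup\{w(z_0):w\in PSH^-_c(V),\ w\le\phi_j\}$. A continuous competitor $w$ in this family realizing the supremum to within $\varepsilon$ satisfies $w\le u_j$ on $V\setminus F$ and is continuous at $z_0$, yielding $\liminf_{z\to z_0}u_j(z)\ge u_j(z_0)-2\varepsilon$; combined with upper semicontinuity inherited from plurisubharmonicity, this delivers continuity.

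For part (b), the identity above reads $u_j(z)=\sup\{w(z):w\in PSH^-_c(V),\ w\le\phi_j\}$ for every $z\in V\setminus(E\cup F)$. Via Choquet's lemma I choose a countable subfamily $\{w_{j,k}\}_{k\ge 1}\subset PSH^-_c(V)$ whose supremum equals $u_j$ at every point of a fixed countable dense subset $\{z_m\}\subset V\setminus F$, and set
$$v_j:=\max_{1\le k\le N_j}w_{j,k}\in PSH^-_c(V),$$
with $N_j$ chosen so that $v_j(z_m)>u_j(z_m)-1/j$ for $m\le j$. Since $v_j\le\phi_j$, one has $\varlimsup_{j\to\infty}v_j\le u^*$ on $\ov V$. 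Writing $h:=\varlimsup_j v_j$, local irreducibility of $V\setminus F$ makes $h^*$ plurisubharmonic there; because $h(z_m)=u(z_m)$ on the dense subset of $V\setminus(E\cup F)$ used above, the psh functions $h^*$ and $u$ agree on a dense subset of $V\setminus(\ov E\cup F)$ and hence coincide on $V\setminus F$. The negligible set $\{h<h^*\}$ is pluripolar, and refining the diagonal selection to absorb a countable exhaustion of this pluripolar set gives $v_j\to u$ pointwise on $V\setminus(\ov E\cup F)$.

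Part (c) follows from pluripolarity of $\ov E\cup F$ (the first half by hypothesis, the second because $F\subset\{\psi=-\infty\}$) combined with a standard truncation against a local lower bound of $u$, upgrading $\{v_j\}$ to a locally uniformly bounded sequence in $PSH^-_c(V)$ still converging to $u$ off a pluripolar set; Bedford--Taylor's continuity theorem for the complex Monge--Amp\`ere operator on locally uniformly bounded psh sequences then gives $(dd^c v_j)^k\to(dd^c u)^k$ weakly. The hardest step will be part (b): producing a single sequence in $PSH^-_c(V)$ whose pointwise limit is $u$ everywhere on $V\setminus(\ov E\cup F)$, rather than on a dense subset or outside a pluripolar set, since one must reconcile the Jensen-measure equality (defined via the cone $PSH^-(V)$ on all of $V$) with the envelope we actually work with on the open subvariety $V\setminus F$, and the $\psi$-perturbation is what bridges these two viewpoints.
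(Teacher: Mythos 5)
Your part (a) is essentially the paper's argument: the envelope over $PSH^{-}(V)$ (or equivalently, after your $\psi$-perturbation bridge, over $PSH^{-}(V\setminus F)$) is shown to be plurisubharmonic on $V\setminus F$ exactly as in the paper's Lemma \ref{lm3}, and the continuity at points of $V\setminus(E\cup F)$ follows from lower semicontinuity of the continuous envelope $S^c\va_j$ plus $J_z=J^c_z$. That part is sound.

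Part (b) has a genuine gap, which you half-concede at the end. Your diagonal selection only forces $v_j\to u$ on a countable dense subset of $V\setminus F$, and the attempted upgrade fails at both steps. First, two plurisubharmonic functions with $h^*\le u$ that agree on a dense set need not coincide when $u$ is merely upper semicontinuous (the argument ``$h^*(z_0)\ge\varlimsup_{m}h^*(z_m)=\varlimsup_m u(z_m)$'' does not recover $u(z_0)$, since upper semicontinuity bounds $u(z_0)$ from below, not above, by that limsup); a genuine fine-topology or sub-mean-value argument would be needed and you do not supply one. Second, even granting $h^*=u$, ``refining the diagonal selection to absorb the pluripolar set $\{h<h^*\}$'' does not produce convergence at \emph{every} point of $V\setminus(\ov E\cup F)$, which is what the statement requires. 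The missing idea is to exploit the continuity of $u_j=S\va_j=S^c\va_j$ on $V\setminus(E\cup F)$ that you already established in (a): since the Choquet sequence $v_{l,j}\in PSH^{-}_c(V)$ increases to the \emph{continuous} function $S^c\va_j$ there, Dini's theorem gives uniform convergence on each compact $K_j$ of an exhaustion of $V\setminus(\ov E\cup F)$, and the diagonal choice $\Vert S\va_j-v_{l(j),j}\Vert_{K_j}\le 1/j$ then yields pointwise convergence to $u$ at every point.

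Part (c) is also not closed. Pointwise convergence off a pluripolar set of a locally uniformly bounded, non-monotone sequence of plurisubharmonic functions does \emph{not} imply weak convergence of the Monge--Amp\`ere measures; Bedford--Taylor's continuity theorem applies to monotone sequences or to convergence in capacity, and there are classical counterexamples otherwise. The paper first modifies the construction (truncating $\va_j$ by $\inf_{V_j}u$ and controlling the error outside the sublevel sets $W_j=\{\va<-j\}$ of a potential for $\ov E$), proves convergence \emph{locally in capacity} using quasicontinuity of $u$, and only then invokes Xing's convergence theorem; it also needs a separate argument at points of $V_s$, where the operator is defined by extension by zero and one uses that $V_s$ has outer capacity zero. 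Your sketch skips all of this.
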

\n
We prefer to postpone a brief discussion on pluripolar sets and Monge-Amp\`ere operator on the complex variety $V$ in the next section.
Several remarks concerning Theorem \ref{thm1} are now in order.

\n
{\bf Remarks.} (a) We do not know if there always exists a function $\psi$ that satisfies (i) and (ii) even in the case where $V$ is Stein.
However,  if $V$ is a complex variety in a bounded pseudoconvex domain $D'$ that contains $\ov{D}$ then such a function $\psi$ can be found as follows.
Since the singular part  of $V$, denoted by $V_s$  is a (proper) complex subvariety of $V$, we deduce that $V_s$ is also a complex subvariety of $D'$.
Thus, $V_s=\{z \in D': f_1(z)=\cdots=f_k(z)=0\}$, where $f_1, \cdots, f_k$ are holomorphic functions on $D'$.
It is clear that $\psi:= \log(\vert f_1\vert+\cdots+\vert f_k\vert)-M \in PSH^{-} (V)$ is the desired plurisubharmonic function for $M>0$ large enough.

\n
(b) We should say that in the case where $V$ is an open subset of $\C^n$ and
$E =\emptyset$, the assertions (a) and (b) are implicitly contained in Theorem 3.1 of [DW]. We thought it may be of interest to include the case where there exists some {\it exceptional} set $E$
so that the two classes of Jensen measures may differ on $E$.

\n
(c) The statement (c) was essentially proved in Theorem of [Di]. Nevertheless, the somewhat complicated proof given there does not even generalize to the case $V$ is a smooth complex variety since it uses convolutions with smoothing kernels.
\vskip0,3cm
\n
The next main result deals with the case where the exceptional set $E$ mentioned in Theorem {\it might} occurs.
\begin{theorem}\label{thm2}
Let $V$ be a  Stein complex variety in a  bounded domain  $D \subset \C^n$. Suppose that there exists  $v \in PSH^{-} (V), v \not \equiv-\infty$  and a compact  $K\subset \partial  V$ satisfying the following properties:

\n
(i) $\underset{z \to \xi}\lim  v(z)   = - \infty,\ \forall   \xi \in K;$

\n
(ii) $J_\xi^c = \{\delta_\xi\},  \forall  \xi \in (\partial V)\setminus  K.$

Then the following statements hold true:

\n
(a) For every $z \in V \setminus E$, where $E:=\{z \in V:  v(z)=-\infty\}$ we have
 $J_z =  J^c_z.$

 \n
 (b) Suppose in addition that $V$ is locally irreducible, then for every $u \in PSH^{-} (V)$ there exists a sequence $u_j \in PSH^{-} (V)$ such that  $u_j$ is continuous at every point of
 $U:=\ov{V} \setminus (K \cup E)$ and $u_j^* \downarrow u^*$ on $U.$
\end{theorem}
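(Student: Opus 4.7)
For part (a), the plan is to verify the hypotheses of Proposition \ref{pro1} with exceptional set $E$, which then yields $J_z = J_z^c$ for every $z \in V \setminus E$. Given $u \in PSH^{-}(V)$, I aim to construct a sequence $\{u_j\} \subset PSH^{-}_c(V)$ with $u_j \to u$ pointwise on $V \setminus E$ and $\varlimsup_{j \to \infty} u_j \le u$ on $V$. The starting point is $u_\ep := u + \ep v \in PSH^{-}(V)$; condition (i) forces $(u_\ep)^* = -\infty$ on $K$. Since $V$ is Stein, the Fornaess--Narasimhan theorem (Theorem 5.5 in [FN]) produces continuous psh functions $w_{j,\ep}$ on $V$ with $w_{j,\ep} \downarrow u_\ep$ as $j \to \infty$.

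The core step is to convert the $w_{j,\ep}$ into elements of $PSH^{-}_c(V)$. Near $K$, a truncation $\max(w_{j,\ep}, -M)$ works, since $u_\ep \to -\infty$ on $K$ implies the truncated function is identically $-M$ in a $V$-neighborhood of $K$ for $M$ large. At each $\xi \in (\partial V) \setminus K$, the equality $J_\xi^c = \{\delta_\xi\}$ from condition (ii) means, via Edwards' duality, that for any continuous $f$ on $\ov V$ the supremum of $PSH^{-}_c(V)$ functions dominated by $f$ attains $f(\xi)$ at $\xi$, i.e., $\xi$ is regular for the cone $PSH^{-}_c(V)$. Local gluing with such peak-type continuous psh minorants adjusts the truncated $w_{j,\ep}$ to be continuous and negative on all of $\ov V$. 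A diagonal sequence $u_j := \tilde w_{j(k),\ep_k}$ with $\ep_k \to 0$ chosen slowly then yields a sequence verifying the hypothesis of Proposition \ref{pro1}.

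For part (b), with $V$ locally irreducible I take $\psi(z) := \de(|z|^2 - R^2)$ for $R$ large, which is a bounded continuous negative psh function on $\ov V$; then $F := \{\psi = -\infty\} = \emptyset$ and $V \setminus F = V$ is trivially locally irreducible. Part (a) supplies $J_z = J_z^c$ on $V \setminus E$, so Theorem \ref{thm1} applies with exceptional set $E$ and yields $u_j \in PSH^{-}(V)$ with $u_j \downarrow u$ on $V$ and $u_j$ continuous at every point of $V \setminus E$. It remains to arrange continuity of $u_j^*$ and the convergence $u_j^*(\xi) \downarrow u^*(\xi)$ at $\xi \in (\partial V) \setminus K \subset U$. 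Using condition (ii) again: $J_\xi \subset J_\xi^c = \{\delta_\xi\}$ combined with $\delta_\xi \in J_\xi$ forces $J_\xi = \{\delta_\xi\}$, and Edwards' duality then produces peak-type continuous psh minorants at $\xi$; replacing the Theorem \ref{thm1} sequence by appropriate maxima against such peak functions enforces continuity at each such $\xi$ and transfers the monotone convergence to $U$ without destroying the decrease on $V$.

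The main obstacle lies in part (a), in the modification step: enforcing continuity and negativity of the $w_{j,\ep}$ on $\partial V$ while preserving both the pointwise limit on $V \setminus E$ and the upper bound $\varlimsup u_j \le u$ on $V$. Condition (i) handles the $K$-side by truncation, but the gluing with peak functions on $(\partial V) \setminus K$ and the subsequent diagonal procedure require careful quantitative control to avoid disturbing the interior limit.
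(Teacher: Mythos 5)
Your overall strategy for (a) --- reduce to Proposition \ref{pro1} by producing, for each $u\in PSH^{-}(V)$, a sequence in $PSH^{-}_c(V)$ converging to $u$ on $V\setminus E$ with $\varlimsup\le u$ on $V$ --- is reasonable, and you correctly identify the key ingredients ($u+\ep v$, Fornaess--Narasimhan, condition (ii) via Edwards, gluing). But the ``core step'' is exactly where the proof lives, and your sketch of it does not work. First, the truncation near $K$: for a \emph{fixed} $j$ the Fornaess--Narasimhan approximant $w_{j,\ep}$ satisfies only $w_{j,\ep}\ge u+\ep v$; nothing forces $w_{j,\ep}\le -M$ near $K$ (only the decreasing \emph{limit} tends to $-\infty$ there), so $\max(w_{j,\ep},-M)$ is not constant near $K$, and $w_{j,\ep}$ is merely continuous on $V$, with no continuous extension to any part of $\partial V$. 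Second, at $\xi\in(\partial V)\setminus K$, hypothesis (ii) together with Edwards' duality gives only that the \emph{envelope} $S^c f$ (a supremum, hence lower semicontinuous but not continuous) attains $f(\xi)$; it does not produce a single continuous plurisubharmonic peak function at $\xi$, so the proposed ``local gluing with peak-type minorants,'' carried out simultaneously along all of $(\partial V)\setminus K$ while preserving the interior limit, is not substantiated. The paper's proof circumvents both problems by never trying to push the interior approximants to the boundary: it fixes $\va_j\in\mathcal C(\ov V)$ decreasing to $u^*$, approximates the envelope $S^c\va_j$ from below by $\va_{j,l}\in PSH^{-}_c(V)$ (Choquet), and proves the crucial comparison $\va_{j,l}\ge u+\ep v-\ep/2$ on the \emph{interior} hypersurfaces $\partial V_l$ for $l$ large --- by contradiction, where condition (i) excludes cluster points in $K$ (since the left side is bounded below) and the equality $S^c\va_j=\va_j$ on $(\partial V)\setminus K$ excludes the rest. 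The gluing then takes place along $\partial V_l$, matching $\max\{\va_{j,l},\,u_{m(l)}+\ep v_{m(l)}-\ep\}$ inside with $\va_{j,l}$ outside; continuity on $\ov V$ is inherited from $\va_{j,l}$. This comparison claim is the heart of the argument and is missing from your proposal.

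For (b) there is a similar issue. Invoking Theorem \ref{thm1} with $F=\emptyset$ does give $u_j=S\va_j\in PSH^{-}(V)$ decreasing to $u$ and continuous on $V\setminus E$, but your plan to ``take maxima against peak functions'' at points of $(\partial V)\setminus K$ again presupposes peak functions that (ii) does not provide, and risks destroying the monotone decrease. The paper needs no modification of $u_j$ at all: since $S\va_j=S^c\va_j$ on $V$ and $S^c\va_j$ is lower semicontinuous on $\ov V$ with $S^c\va_j(\xi)=\va_j(\xi)$ at $\xi\in(\partial V)\setminus K$, one gets $\va_j(\xi)\le\varliminf_{z\to\xi}u_j(z)$ directly, while $u_j^*\le\va_j$ gives the reverse bound, so $u_j$ is automatically continuous at every point of $U$ and $u_j^*\downarrow u^*$ there.
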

\n
{\bf Remarks.} (a) The condition (ii) is fulfilled at $\xi \in \partial V$ if there is a local continuous \psh\ barrier at $\xi$ i.e.,
there exists $u \in PSH^{-}_c (V \cap \mathbb B (\xi, r))$ for some small  $\mathbb B(\xi, r) \subset \C^n$
such that $u(\xi)=0$ whereas $u<0$ elsewhere.
Indeed, by shrinking $r$ we may assume that $u<-\delta$ for $\ov{V} \cap \partial \mathbb B(\xi, r)$ for some $\de>0$.
By setting $\tilde u:=\max \{u, -\de\}$ on $\ov V \cap \mathbb B(\xi, r)$ and $\tilde u:= -\de$ out of $\mathbb B(\xi, r)$ and using the gluing lemma (cf. Lemma \ref{gluing})
we have $\tilde u \in PSH^{-}_c (V)$ and satisfies $\tilde u(\xi)=0, \tilde u<0$ elsewhere. It easily implies that $J_\xi=\{\delta_\xi\}.$
This reasoning is essentially contained in Proposition 1.4 in [Si].

\n
(b) Let $\va (z,w):=\vert z\vert^4+\vert w\vert^4, (z, w) \in \mathbb C^2$  and $\mathbb B_2$ be the unit ball in $\mathbb C^2.$ We set
$$D:=\{(z,w, t) \in \mathbb B_2 \times \mathbb C: \vert t\vert<e^{-\va (z,w)}\}, V:=\{(z, w, t) \in D: t^2=z^2w\}.$$
Then $V$ is a complex variety in the (bounded)  Hartogs pseudoconvex domain $D$.
Notice that, $V$ is locally reducible at every point $(0, w, 0) \in D$ with $0<\vert w \vert< 1$  (cf. [Ch] p. 56).
Given $\xi_0 =(z_0, w_0, t_0) \in \partial V, z_0w_0 \ne 0.$  By strict plurisubharmonicity of $\va$ at $(z_0, w_0)$ and strict pseudoconvexity of $\partial \mathbb B_2$ we see that every
Jensen measure $\mu \in J_{\xi_0}$ is supported at $\{\xi_0\} \cup\{(z_0, w_0, -t_0)\}.$
It follows that  $J_{\xi_0}=\{\de_{\xi_0}\}.$ Thus we may apply Theorem \ref{thm2} with
$$K:=\{(z, 0, 0): \vert z\vert=1\} \cup \{(0, w, 0): \vert w\vert =1\}, v(z, w):= \log \vert z w\vert$$
and get $E=\emptyset.$ The above construction is also inspired  from an example of Sibony (cf. [Si], p. 310).
\vskip0,3cm
\n
The theorem below deals with the problem of finding a bounded continuous {\it maximal} plurisubharmonic $u$ on $V$ such that the boundary values of $u$ coincides with a given continuous function defined on part of the boundary $\partial V.$
A weaker version of this result is given in Theorem 1.8  of [Wik2] under the assumption that $V$ admits a $B-$regular \nhd\  in $\mathbb C^n.$
Recall that $u \in PSH (V)$ is said to be maximal if for every relatively compact open subset $U$ of $V$ and every $v \in PSH(V)$ such that $v \le u$ on $V \setminus U$ we have $v \le u$ on $V.$
This definition is taken from Definition 1.6 in [Wik2] and is  analogous to the classical one given by Sadullaev for  the case where $V$ is an open set of $\mathbb C^n$ (cf. Proposition 3.1.1 in [Kl]).
\begin{theorem}\label{thm3}
Let $V$ be a Stein locally irreducible complex variety in a bounded domain $D \subset \C^n$. Suppose that there is $v \in PSH^{-} (V), v > -\infty$   on  $V$ and a compact  $K\subset \partial  V$ satisfying the following properties:

\n
(i) $\underset{z \to \xi}\lim  v(z)   = - \infty,\ \forall   \xi \in K;$

\n
(ii)  Every $\xi \in (\partial V) \setminus K$ admits a local continuous \psh\ barrier.

Then for every $\va \in \mathcal C(\partial V)$, there exists a unique bounded, maximal continuous  \psh\ function $u$ on $V$ such that
$$\lim_{z \to \xi, z \in V} u(z)=\va(\xi), \ \forall \xi \in (\partial V)\setminus K.$$
\end{theorem}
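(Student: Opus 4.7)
The strategy is to realize $u$ as a Perron--Bremermann envelope and then use Theorem~\ref{thm2} to show the envelope is continuous and maximal. After subtracting a constant we may assume $\va<0$ on $\pa V$ and set
\[
u(z):=\sup\{w(z):w\in\F_\va\},
\]
where $\F_\va$ is the family of bounded \psh\ functions $w$ on $V$ with $\varlimsup\limits_{z\to\xi,\,z\in V} w(z)\le\va(\xi)$ for every $\xi\in(\pa V)\setminus K$. The constant $\inf\va$ belongs to $\F_\va$, so $u\ge\inf\va$. For the upper bound, for $w\in\F_\va$ the function $w+\ve v$ has $\limsup\le\sup\va$ on $(\pa V)\setminus K$ and tends to $-\infty$ on $K$, so the maximum principle gives $w+\ve v\le\sup\va$ on $V$; letting $\ve\downarrow 0$ (legal since $v>-\infty$ on $V$) yields $w\le\sup\va$.

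\n
Next I handle the boundary behavior at $\xi_0\in(\pa V)\setminus K$. By remark~(a) after Theorem~\ref{thm2}, the local continuous psh barrier at $\xi_0$ extends via the gluing Lemma~\ref{gluing} to a global $\tilde b\in PSH_c^-(V)$ with $\tilde b(\xi_0)=0$ and $\tilde b\le-\de$ on $\ov V\setminus\B(\xi_0,r)$. The subsolution $s(z):=\va(\xi_0)-\ve+A\tilde b(z)$ with $A$ sufficiently large belongs to $\F_\va$ and satisfies $s(z)\to\va(\xi_0)-\ve$, yielding $\varliminf\limits_{z\to\xi_0} u(z)\ge\va(\xi_0)-\ve$. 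Conversely, for any $w\in\F_\va$ the function $w+A\tilde b$ is \psh\ on $V\cap\B(\xi_0,r)$, and the maximum principle, using the continuity of $\va$ at $\xi_0$ to bound $w$ on $(\pa V)\cap\B(\xi_0,r)$ and the inequality $\tilde b\le-\de$ on $V\cap\pa\B(\xi_0,r)$ (with $A$ large), gives $w(z)\le\va(\xi_0)+\ve-A\tilde b(z)$. Passing to the supremum in $w$ and then $z\to\xi_0$ yields $\varlimsup u(z)\le\va(\xi_0)+\ve$, and $\ve\downarrow 0$ gives $\lim_{z\to\xi_0} u(z)=\va(\xi_0)$. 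In particular $u^*\in\F_\va$, so Choquet's lemma gives $u=u^*$ on $V$ and $u$ is \psh.

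\n
The crux is continuity on $V$. Since $v>-\infty$ on $V$, the set $E$ in Theorem~\ref{thm2} is empty and part~(a) there gives $J_z=J_z^c$ for every $z\in V$. Edwards' duality, applied simultaneously to $PSH^-(V)$ and $PSH_c^-(V)$, then identifies $u$ with the continuous envelope
\[
u^c(z):=\sup\{w(z):w\in\F_\va\cap PSH_c^-(V)\},
\]
because both admit the representation $\inf\{\int_{(\pa V)\setminus K}\va\,d\mu:\mu\in J_z\}$, respectively with $J_z^c$, as an infimum over Jensen measures, and these coincide. Hence $u=u^c$ on $V$. Being a pointwise supremum of functions continuous on $\ov V$, $u^c$ is lower semicontinuous on $V$; combined with the upper semicontinuity of the \psh\ function $u$ this forces $u\in\mathcal C(V)$.

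\n
For maximality, if $w\in PSH(V)$ with $w\le u$ outside some $U\Sub V$, then $w$ is bounded above on $V$ (upper semicontinuity on $\ov U$ and $w\le\sup\va$ on $V\setminus U$), so $\max(w,u)\in\F_\va$; thus $\max(w,u)\le u$ and $w\le u$. For uniqueness, suppose $u_1,u_2$ both solve the Dirichlet problem. For small $\ve,\eta>0$, the boundary limits of $u_1,u_2$ together with $v\to-\infty$ on $K$ give $u_1+\ve v-\eta<u_2$ in a neighborhood of $\pa V$; hence $\max(u_1+\ve v-\eta,u_2)$ coincides with $u_2$ there, and by the maximality of $u_2$ equals $u_2$ on $V$, so $u_1+\ve v\le u_2+\eta$; letting $\ve,\eta\downarrow 0$ gives $u_1\le u_2$, and symmetry gives equality. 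The principal technical obstacle is the identification $u=u^c$: the family $\F_\va$ constrains $w$ only on $(\pa V)\setminus K$ rather than on all of $\ov V$, so Edwards' duality must be formulated for a suitable lsc Dirichlet datum on $\ov V$ (for instance, by passing to the supremum of Edwards envelopes over continuous extensions of $\va$), and it is in this reduction that most of the work lies.
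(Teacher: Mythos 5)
Your overall strategy --- a Perron--Bremermann envelope, the Jensen-measure equality $J_z=J_z^c$ on $V$ from Theorem \ref{thm2}(a), barriers at points of $(\pa V)\setminus K$, and the comparison $w+\ve v$ to neutralize $K$ --- is essentially the paper's, and your barrier, maximality and uniqueness arguments are sound. But the step you yourself flag as ``where most of the work lies'' is a genuine gap, and it is precisely the step that yields continuity of $u$, i.e.\ the heart of the theorem. You want to identify $u=\sup\F_\va$ with the continuous envelope $u^c$ by applying Edwards' duality to the boundary datum equal to $\va$ on $(\pa V)\setminus K$ and $+\infty$ on $K\cup V$. That datum is not lower semicontinuous on $\ov V$: a sequence in $(\pa V)\setminus K$ along which $\va$ stays bounded may converge to a point of $K$, where the datum is $+\infty$. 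So Theorem \ref{thmEd} does not apply to it as stated, and your parenthetical fix via ``continuous extensions of $\va$'' is not carried out.

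The missing observation is that $\va$ is already given as a continuous function on \emph{all} of $\pa V$, so there is no need to dualize the partially constrained family $\F_\va$ at all. Define, as the paper does, the envelopes $S\va$ and $S^c\va$ with the constraint $u^*\le\va$ on the whole of $\pa V$; the corresponding datum ($\va$ on $\pa V$, $+\infty$ on $V$) is lsc on $\ov V$, Edwards' theorem applies directly, and $J_z=J_z^c$ gives $S\va=S^c\va$ on $V$, whence lower semicontinuity. These envelopes coincide with yours: for $w\in\F_\va$ one has $(w+\ve v)^*\le\va$ on all of $\pa V$ (since $v\to-\infty$ on $K$ while $w$ is bounded), hence $w+\ve v\le S\va$, and letting $\ve\downarrow 0$ gives $w\le S\va$, so $u=S\va$. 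A second, smaller gap: your assertion that $u^*\in\F_\va$ (equivalently, that $u^*$ is \psh) is exactly the point where local irreducibility of $V$ must be invoked --- Choquet's lemma alone does not give plurisubharmonicity of a regularized supremum on a variety (cf.\ the example of Zeriahi cited in the introduction); the paper uses that hypothesis explicitly there.
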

Our final result generalizes Theorem 2.3 in [Wik2] in that it does not assume the existence of a $B-$regular \nhd\  of $V$ in $\mathbb C^n.$
\begin{theorem}\label{thm4}
Let $V$ be a complex variety in  a bounded domain $D \subset \C^n$ having the following properties:

\n
(i) $J_\xi^c = \{\delta_\xi\},  \forall  \xi \in \partial V.$

\n
(ii) There exists a negative continuous \psh\ exhaustion function $\rho$ for $V$.

Then for every $u \in PSH^{-} (V)$ there exists a decreasing sequence  $\{u_j\}_{j \ge 1} \subset  PSH^{-}_c (V)$ such that
$u_j \downarrow u^*$ on  $ \ov{V}$.
\end{theorem}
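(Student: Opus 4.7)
The plan is to construct the required decreasing sequence as Perron-type envelopes over $PSH^{-}_c(V)$ and to pin down their pointwise limit via Edwards' duality; the two hypotheses will be used to control the Jensen measures $J^c_z$ at boundary and interior points respectively.

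Since $u^{*}$ is upper semicontinuous and bounded above by $0$ on the compact set $\ov V$, I first choose continuous functions $\varphi_k : \ov V \to \R$ with $\varphi_k \le 0$ and $\varphi_k \downarrow u^{*}$ pointwise on $\ov V$. For each $k$ consider the Perron envelope
\[
\psi_k(z) := \sup\bigl\{ \varphi(z) : \varphi \in PSH^{-}_c (V),\ \varphi \le \varphi_k \text{ on } \ov V \bigr\},
\]
which is automatically decreasing in $k$ because the defining family shrinks, and non-trivial since every negative constant belongs to $PSH^{-}_c(V)$. Edwards' duality, applied to the cone $PSH^{-}_c(V)$ with the continuous data $\varphi_k$, supplies the representation
\[
\psi_k^{*}(z) = \inf_{\mu \in J^c_z} \int_{\ov V} \varphi_k \, d\mu, \qquad z \in \ov V.
\]

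The next step is to verify that $\psi_k^{*} \downarrow u^{*}$ pointwise on $\ov V$. At $\xi \in \partial V$, hypothesis (i) collapses the infimum and yields $\psi_k^{*}(\xi) = \varphi_k(\xi) \downarrow u^{*}(\xi)$. At $z \in V$, hypothesis (ii) makes $V$ Stein, so Theorem \ref{thm2}(a) applies with $K = \emptyset$ and $v := \rho$; since $\rho$ is continuous the exceptional set $\{\rho=-\infty\}$ is empty, and hence $J_z = J^c_z$ throughout $V$. The defining property of $J_z$ applied to $u$ then yields $u^{*}(z) \le \int_{\ov V} u^{*} \, d\mu$ for every $\mu \in J^c_z$, while $\delta_z \in J^c_z$ gives the reverse inequality. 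Monotone convergence in $k$ therefore produces $\psi_k^{*}(z) \downarrow u^{*}(z)$.

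The main obstacle is promoting the family $\{\psi_k^{*}\}$ to an honest decreasing sequence in $PSH^{-}_c(V)$. By construction $\psi_k^{*}$ is upper semicontinuous on $\ov V$ and agrees with $\varphi_k$ on $\partial V$, so that continuity on $\partial V$ is clear (sandwiched between the continuous $\varphi_k$ and the lower semicontinuous $\psi_k$). However, plurisubharmonicity of $\psi_k^{*}$ on $V$ is genuinely delicate when $V$ is not locally irreducible, as warned by Zeriahi's example in the introduction, and interior continuity is not automatic. To bypass this, I would invoke the Fornaess--Narasimhan global approximation (Theorem 5.5 in [FN], available because $\rho$ makes $V$ Stein) on the relatively compact exhaustion pieces $\{\rho \le -1/j\}$ to produce genuinely continuous psh approximants to $\psi_k^{*}$ from above, and then glue them back to $\ov V$ by means of the gluing lemma and the continuous exhaustion $\rho$, using hypothesis (i) to guarantee the correct boundary values. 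A diagonal extraction in $k$ and $j$ then yields the desired sequence $\{u_j\} \subset PSH^{-}_c(V)$ with $u_j \downarrow u^{*}$ on $\ov V$.
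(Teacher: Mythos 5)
Your first half --- the envelopes over $PSH^{-}_c(V)$, Edwards' duality, hypothesis (i) at boundary points, and Theorem \ref{thm2}(a) with $K=\emptyset$ and $v:=\rho$ to get $J_z=J^c_z$ in the interior and thereby identify the pointwise limit --- coincides with the paper's opening moves and is correct. The gap is in the second half, and it is twofold. First, you propose to apply the Fornaess--Narasimhan approximation theorem to $\psi_k^{*}$, but that theorem approximates a function that is already known to be plurisubharmonic on the Stein variety, and you have just (correctly) observed that plurisubharmonicity of $\psi_k^{*}$ is precisely what fails to be automatic when $V$ is not locally irreducible --- an assumption Theorem \ref{thm4} deliberately omits (see Remark (a) following it). So the proposed step is circular. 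The paper never needs $S^c\va_k$ or its regularization to be plurisubharmonic: it applies Fornaess--Narasimhan to $u$ itself, obtaining $v_l\downarrow u$ in $PSH(V)\cap\mathcal C(V)$, and uses only the Choquet increasing sequence $\va_{j,l}\in PSH^{-}_c(V)$ lying \emph{below} $S^c\va_j$ to control boundary values; the two are then spliced along $\partial V_j$ by the gluing lemma, with $j\rho+\va_{j,k(j)}$ as the outer piece.

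Second, and more seriously, ``a diagonal extraction in $k$ and $j$'' cannot produce a \emph{decreasing} sequence; at best it yields functions $\tilde u_j\in PSH^{-}_c(V)$ converging pointwise to $u^{*}$. The monotonicity demanded by the statement is the genuinely hard part of the paper's proof: one passes to $u_j:=\sup_{m\ge j}\tilde u_m$, which is decreasing by construction, and must then show that the finite suprema $h_{p,j}=\sup_{j\le m\le p}\tilde u_m$ converge \emph{uniformly} on $\ov V$ to $u_j$ as $p\to\infty$ --- a compactness argument with separate interior and boundary cases, using Lemma \ref{lm2} and the normalization $\max\{u^{*},\va_j-1/j\}\le\va_{j,k(j)}\le\va_j$ on $\partial V$ --- so that $u_j$ remains continuous and plurisubharmonic by the Fornaess--Narasimhan criterion. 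Your proposal contains no mechanism for this step, so as written it proves at most pointwise, non-monotone approximation.
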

\n
{\bf Remarks.} (a) According to Theorem \ref{thm2}(b), under the additional assumption
that $V$ is  Stein {\it locally irreducible} the conclusion of the theorem is still valid without assuming (ii).

\n
(b) The function $-1/\rho(z)+\vert z\vert^2$ is continuous  strictly \psh\ on $V$ and tends to $+\infty$ as $z \to \partial V.$ In particular, $V$ is Stein.

\n
(c) If $V$ satisfies $(ii)$ alone then every  $\mu \in J^c_\xi$ is supported on $\partial V$. This follows easily from the estimate $0=\rho(\xi) \le \int_{\ov{V}} \rho d\mu=\int_V \rho d\mu.$
\maketitle
\section{Preliminaries}
We recall the general version of Edward's duality theorem which says that upper envelopes of upper semicontinuous functions defined on compact metric spaces may be
expressed as lower envelopes of integrals with respect to certain classes of measures.

Let $X$ be a compact metric space, and let $\mathcal F$ be a convex cone of real-valued, bounded from above and upper semicontinuous functions on $X$ containing all the constants. If
$g$ is a real-valued function on $X$, then we define
$$Sg(z):= \sup \{u(z) : u \in \mathcal  F, u \le g\}.$$
Denote by $\mathcal B(X)$ the class of positive, regular Borel measures on $X$.
For $z \in X$ we define
$$J_z^{\mathcal F}:= \{\mu \in \mathcal B (X): u(z) \le \int_X ud\mu,  \ \forall u \in \F\}.$$
It is easy to see that $J_z^{\mathcal F}$  is a convex, weak-$*$ compact subset of $\mathbb B(X)$. Moreover, $\mu (X)=1$ for every $\mu \in J_z^{\mathcal  F}$
since $\mathcal F$ contains the constants.
On the other hand, if g is a Borel measurable function on X, then we set
$$Ig(z):= \inf\{\int_X gd\mu:  \mu \in J_z^{\mathcal F} \}.$$
\n
Now we are able to formulate the following basic duality theorem of Edwards (cf. [Ed], [Wik1]).
\begin{theorem}
Let $\F$ be as above. If $g$ is lower semicontinuous on X, then $Sg = Ig.$
\end{theorem}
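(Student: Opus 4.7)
The easy direction $Sg \le Ig$ is immediate: for every $u \in \F$ with $u \le g$ and every $\mu \in J_z^{\F}$,
\[
  u(z) \le \int_X u\, d\mu \le \int_X g\, d\mu,
\]
where the second inequality uses that $g$, being lower semicontinuous on the compact space $X$, is bounded below (and $\mu$ is positive). Taking $\sup$ over $u$ and $\inf$ over $\mu$ yields $Sg(z) \le Ig(z)$. For the reverse inequality, I would first handle the case $g \in C(X)$ via a Hahn--Banach argument, and then bootstrap to lower semicontinuous $g$ by approximating from below and exploiting the weak-$\ast$ compactness of $J_z^{\F}$.

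\textbf{Continuous case.} Fix $z \in X$ and define $p : C(X) \to \R$ by
\[
  p(f) := \sup\{u(z) : u \in \F,\ u \le f\}.
\]
Since $\F$ is a convex cone containing the constants, $p$ is finite, positively homogeneous, and superadditive, hence concave on $C(X)$; in particular $p(c) = c$ for every constant $c$ and $p(f) \ge 0$ whenever $f \ge 0$ (the constant $0 \in \F$ competing). Applying Hahn--Banach to the sublinear functional $-p$, one extends the one-dimensional map $tg \mapsto t\, p(g)$ on $\{tg : t \in \R\}$ to a linear $\ell : C(X) \to \R$ with $\ell \ge p$ on $C(X)$ and $\ell(g) = p(g) = Sg(z)$. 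From $\ell(\pm 1) \ge p(\pm 1) = \pm 1$ we get $\ell(1) = 1$, and $\ell(f) \ge p(f) \ge 0$ for $f \ge 0$; by the Riesz representation theorem, $\ell(\cdot) = \int_X \cdot\, d\mu$ for a unique Borel probability measure $\mu$ on $X$.

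\textbf{Checking $\mu \in J_z^{\F}$ and passing to LSC $g$.} To verify $\mu \in J_z^{\F}$, fix $u \in \F$ and use the compact-metric structure of $X$ to write $u$ as the pointwise infimum of a decreasing sequence $\{f_n\} \subset C(X)$ (e.g.\ $f_n(x) := \sup_{y \in X}(u(y) - n\, d(x,y))$ is Lipschitz with $f_n \downarrow u$). Since $u$ itself competes in the supremum defining $p(f_n)$, we have $\int_X f_n\, d\mu = \ell(f_n) \ge p(f_n) \ge u(z)$, and monotone convergence gives $\int_X u\, d\mu \ge u(z)$. Hence $\mu \in J_z^{\F}$, and $\int_X g\, d\mu = \ell(g) = Sg(z)$ proves $Ig(z) \le Sg(z)$ when $g$ is continuous. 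For general lower semicontinuous $g$, write $g = \sup_n h_n$ with $h_n \in C(X)$, $h_n \uparrow g$, and choose $\mu_n \in J_z^{\F}$ with $\int_X h_n\, d\mu_n \le Sh_n(z) + 1/n \le Sg(z) + 1/n$. By the weak-$\ast$ compactness of $J_z^{\F}$, extract a subsequential limit $\mu \in J_z^{\F}$; for fixed $m$ and $n_k \ge m$,
\[
  \int_X h_m\, d\mu_{n_k} \le \int_X h_{n_k}\, d\mu_{n_k} \le Sg(z) + 1/n_k,
\]
and weak-$\ast$ convergence applied to the continuous $h_m$ yields $\int_X h_m\, d\mu \le Sg(z)$. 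Monotone convergence in $m$ then gives $\int_X g\, d\mu \le Sg(z)$, completing the proof.

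\textbf{Main difficulty.} The delicate step is transferring the inequality $\ell \ge p$, which a priori lives only on $C(X)$, into the Jensen-type inequality $\int u\, d\mu \ge u(z)$ against the entire cone $\F$ of upper semicontinuous functions; this is precisely where the compact-metric hypothesis on $X$ is essential, since it allows each $u \in \F$ to be written as a pointwise decreasing limit of continuous functions and permits the Hahn--Banach inequality to be fed through monotone convergence. The cone-and-constants structure of $\F$ is likewise needed to keep $p$ finite, to make $p$ superadditive, and to force the extended $\ell$ to be both positive and of total mass one.
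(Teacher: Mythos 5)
The paper does not prove this statement at all: it is quoted as Edwards's duality theorem with a citation to [Ed] and [Wik1], so there is no in-paper argument to compare against. Your proof is correct and is, in substance, the standard Hahn--Banach proof found in those references: the inequality $Sg\le Ig$ from the definitions, the construction for continuous $g$ of a representing measure via the superlinear envelope $p$, the transfer of the Jensen inequality to all of $\F$ by writing each $u\in\F$ as a decreasing limit of continuous functions, and the passage to lower semicontinuous $g$ by increasing approximation plus weak-$*$ compactness of $J_z^{\F}$. The only step you leave implicit is the verification that the one-dimensional functional $tg\mapsto t\,p(g)$ is dominated from below by $p$ for \emph{negative} $t$ as well, which is what Hahn--Banach requires on the whole line $\{tg: t\in\R\}$; this follows in one line from superadditivity, $p(-sg)+p(sg)\le p(0)=0$ for $s>0$, using that the constant $0$ lies in $\F$. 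With that remark added, the argument is complete.
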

Apparently the first use of Edwards's duality theorem in pluripotential theory has been made in the seminal work  [Si]  where we can find a systematic study of
domains in $\C^n$ on which the Dirichlet problem for plurisubharmonic functions is solvable.

In our context, by applying the above theorem to the convex cones $\mathcal F_1:=PSH^{-} (V) $
and $\mathcal F_2:=PSH^{-}_c (V)$ we obtain the following result which will be refereed to as Edward's duality theorem.
\begin{theorem}\label{thmEd} (Edward's theorem)
Let $\varphi: \ov V \to (-\infty, +\infty]$ be a lower semicontinuous function. Then  we have
$$\begin{aligned}
&\inf \Big \{\int\limits_{\ov V} \varphi d\mu,  \mu \in  J_z\Big \}=  \sup\{u (z): u \in PSH^{-}(V), u^* \le  \varphi \ \ \text{ on}\ \ \ov V \},\\
&\inf \Big  \{\int\limits_{\ov V} \varphi d\mu,  \mu \in  J^c_z \Big \}=  \sup\{u (z): u \in PSH^{-}_c (V), u \le  \varphi \ \ \text{ on}\ \ \ov V \}.
\end{aligned}$$
\end{theorem}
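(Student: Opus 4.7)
The plan is to deduce both identities directly from the abstract Edwards duality theorem recalled above, applied to the convex cones $\mathcal F_1 := PSH^{-}(V)$ and $\mathcal F_2 := PSH^{-}_c(V)$, regarded as cones on the compact metric space $\ov V$. Since a function $u \in \mathcal F_1$ is a priori defined only on $V$, I would first extend it to $\ov V$ via its upper semicontinuous regularisation $u^*$; the assumption $u^*(\xi) < 0$ on $\partial V$, together with the maximum principle for plurisubharmonic functions, yields $u^* \le 0$ throughout $\ov V$, so the extension is USC and bounded above. Functions in $\mathcal F_2$ are already continuous on $\ov V$, and convexity together with positive homogeneity of both cones are immediate from the definitions.

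The only hypothesis of the abstract theorem that is not automatic is that the cone contain every real constant, whereas $\mathcal F_1$ and $\mathcal F_2$ contain only nonpositive ones. The standard repair is to pass to the enlarged convex cone $\widehat{\mathcal F}_i := \mathcal F_i + \mathbb R$, which remains a convex cone of bounded-above USC functions on $\ov V$ and now contains every constant. A short verification then identifies its Jensen class, in the sense of the abstract theorem, with the class $J_z$ (resp.\ $J_z^c$) of Definition \ref{def2}: testing the Jensen inequality against an arbitrary real constant forces $\mu(\ov V) = 1$, and the remaining condition $u(z) \le \int_{\ov V} u\,d\mu$ for $u \in \mathcal F_i$ is precisely the defining property of $J_z$ (resp.\ $J_z^c$).

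With these reductions, the abstract Edwards theorem applied to $\widehat{\mathcal F}_i$ and to the lower semicontinuous function $\varphi$ on $\ov V$ gives the equality of the two envelopes associated to $\widehat{\mathcal F}_i$; the Jensen-infimum side is exactly the left-hand side of the statement. What remains is to identify the upper envelope taken over $\widehat{\mathcal F}_i$ with the one taken over $\mathcal F_i$ appearing in the statement. Writing a competitor as $v = u + c$ with $u \in \mathcal F_i$, $c \in \mathbb R$, and $u + c \le \varphi$, a translation argument is meant to produce, for each $\ep > 0$, an honest element $\tilde u \in \mathcal F_i$ with $\tilde u^* \le \varphi$ on $\ov V$ and $\tilde u(z) \ge v(z) - \ep$; letting $\ep \to 0$ then matches the two envelopes.

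The main obstacle I anticipate is precisely this last identification. A naive upward shift $u \mapsto u + c$ with $c > 0$ can destroy the strict boundary inequality $u^*(\xi) < 0$ required by the definition of $PSH^{-}(V)$, so the perturbation has to combine the shift with a small negative correction and, where needed, a gluing (via Lemma \ref{gluing}) against a fixed strictly negative barrier living in $\mathcal F_i$. This bookkeeping is routine in pluripotential theory but is the only point at which one does more than directly cite the abstract duality theorem from the preceding subsection.
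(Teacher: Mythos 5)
Your overall route is exactly the one the paper intends: Theorem \ref{thmEd} is given no proof in the paper beyond the remark that one ``applies'' the abstract Edwards theorem to the cones $PSH^{-}(V)$ and $PSH^{-}_c(V)$, and you have correctly isolated the two points this glosses over (extending elements of $PSH^{-}(V)$ to $\ov V$ by $u^*$, and the fact that neither cone contains the positive constants). Your identification of the Jensen class of the enlarged cone $\mathcal F_i+\R$ with $J_z$, resp.\ $J^c_z$, by testing against constants of both signs to force $\mu(\ov V)=1$, is correct and matches Definition \ref{def2}.

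The genuine gap is the final step: producing, for a competitor $u+c\le\va$ with $u\in\mathcal F_i$ and $c>0$, an element $\tilde u\in\mathcal F_i$ with $\tilde u^*\le\va$ and $\tilde u(z)\ge u(z)+c-\ve$. This cannot be done for an arbitrary lower semicontinuous $\va:\ov V\to(-\infty,+\infty]$, and no gluing against a negative barrier will help, because every element of either cone is nonpositive. Concretely, take $V=D$ the unit ball and $\va\equiv 1$: both right-hand sides of the theorem are suprema of negative numbers, hence $\le 0$, while both left-hand sides equal $1$ since $J_z$ and $J^c_z$ consist of probability measures and contain $\de_z$. So the statement is false as printed; the missing hypothesis is negativity of $\va$ (in every application in the paper the test functions $\va_j$ are chosen $<0$ on $\ov V$, or, in the proof of Theorem \ref{thm3}, $\va<0$ on $\pa V$ and $\va\equiv+\infty$ on $V$, which also suffices). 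Once you assume $\va\le 0$ on $\ov V$, your anticipated obstacle evaporates and the barrier/gluing machinery is unnecessary: if $u^*+c\le\va\le 0$ on $\ov V$, then for any $\ve>0$ the function $u+c-\ve$ already belongs to $PSH^{-}(V)$ (its regularization is $\le\va-\ve\le-\ve<0$ on $\pa V$) and still satisfies $(u+c-\ve)^*\le\va$, so letting $\ve\downarrow 0$ identifies the $\widehat{\mathcal F}_i$-envelope with the $\mathcal F_i$-envelope; the same one-line shift works for $PSH^{-}_c(V)$. In short: keep your skeleton, add the negativity hypothesis on $\va$, and replace the proposed gluing step by this elementary downward shift.
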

We will make a good use of the following result about gluing \psh\  functions on complex varieties. This fact has been also  used implicitly in the proof of Theorem 1.8 of [Wik2] .
\begin{lemma} \label{gluing}
Let $V$ be a complex variety of a domain $D \subset \C^n, U \subset V$ be an open subset and $u \in PSH (V), v \in PSH(U)$.
Assume that $\varlimsup\limits_{z \to \xi} v(\xi) \le u(z) \  \forall z \in \partial U.$ Then the function
$$w:=\begin{cases}
\max \{u, v\}  &  \text{on}\  U\\
u &  \text{on}\  V \setminus U.
\end{cases}$$
 belongs to $PSH(V).$
\end{lemma}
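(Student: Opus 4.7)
The plan is to reduce the statement to the classical gluing lemma for subharmonic functions on a disk in $\C$, via the Fornaess--Narasimhan characterization recalled in the introduction: a function $w : V \to [-\infty,\infty)$ that is upper semicontinuous belongs to $PSH(V)$ if and only if $w \circ \theta$ is subharmonic on $\De$ for every holomorphic map $\theta : \De \to V$.

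First I would check that $w$ is upper semicontinuous on $V$. On the open sets $U$ and $V \setminus \ov U$ this is immediate from the upper semicontinuity of $u$ and $v$. The only delicate points are $\xi \in \pa U$, where $w(\xi) = u(\xi)$. Approaching $\xi$ from $V \setminus U$ the bound $\varlimsup w(z) \le u(\xi)$ follows from the upper semicontinuity of $u$. Approaching $\xi$ from within $U$ the same bound follows by combining the upper semicontinuity of $u$ with the boundary hypothesis $\varlimsup_{z \to \xi, z \in U} v(z) \le u(\xi)$, since $w = \max\{u,v\}$ on $U$.

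Next, fix any holomorphic disk $\theta : \De \to V$ and set $\tilde U := \theta^{-1}(U)$, which is an open subset of $\De$ (possibly empty, in which case $w \circ \theta = u \circ \theta$ is subharmonic by Fornaess--Narasimhan applied to $u$). By Fornaess--Narasimhan applied to $u$ on $V$, the composition $u \circ \theta$ is subharmonic on $\De$, and applied to $v$ on $U$ with the holomorphic map $\theta|_{\tilde U} : \tilde U \to U$ it yields that $v \circ \theta$ is subharmonic on $\tilde U$. On $\tilde U$ we have $w \circ \theta = \max\{u \circ \theta, v \circ \theta\}$, while on $\De \setminus \tilde U$ we have $w \circ \theta = u \circ \theta$.

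To glue, I need to transfer the boundary inequality to $\De$. At any $\zeta \in \De \cap \pa \tilde U$, the point $\zeta$ is a limit of points in $\tilde U$ and of points outside $\tilde U$, so by continuity $\theta(\zeta) \in \pa U$. The hypothesis of the lemma then gives $\varlimsup_{\eta \to \zeta, \eta \in \tilde U} v(\theta(\eta)) \le u(\theta(\zeta))$, which is precisely the boundary condition of the classical subharmonic gluing lemma on $\De$. That lemma therefore shows $w \circ \theta$ is subharmonic on $\De$. Together with the upper semicontinuity established in the first step and Fornaess--Narasimhan, this gives $w \in PSH(V)$.

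The main (and essentially only) subtle step is the boundary matching $\De \cap \pa \tilde U \subset \theta^{-1}(\pa U)$, which is what allows the hypothesis on $V$ to translate directly into the hypothesis of the one-dimensional gluing lemma. No deeper obstacle arises because Fornaess--Narasimhan handles the passage between the (possibly singular) variety $V$ and the smooth disk $\De$ for free.
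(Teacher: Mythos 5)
Your proof is correct and follows essentially the same route as the paper: verify upper semicontinuity of $w$ directly, then use the Fornaess--Narasimhan criterion to reduce to the classical gluing lemma for subharmonic functions on $\De$, with the key observation that $\De\cap\partial(\theta^{-1}(U))\subset\theta^{-1}(\partial U)$ transferring the boundary hypothesis. No discrepancies worth noting.
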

\begin{proof}
The proof uses again the above mentioned Fornaess-Narasimhan's criterion for membership in $PSH(V).$ More precisely, by the assumption $w$ is upper semicontinuous on $V$, so it remains to check that
$w \circ \theta$ is subharmonic on $\De$ for every holomorphic map $\theta: \De \to V.$ Clearly
$$w \circ \theta=\begin{cases}
\max \{u \circ \theta, v \circ \theta\}  &  \text{on}\  \theta^{-1} (U) \\
u \circ \theta &  \text{on}\  \De \setminus \theta^{-1} (U).
\end{cases}$$
Since $\theta^{-1} (U)$ is an open subset of $\De$ we see that if  $\lambda \in \partial (\theta^{-1}(U))$ then  $  \theta(\lambda) \in  \partial U,$  and hence
$$
\limsup\limits_{ \theta^{-1}(U)\ni t \to \lambda}  v\circ \theta(t) \le \limsup\limits_{U\ni z \to \theta(\lambda)} v(z) \le u \circ \theta (\lambda).
$$
Thus we may apply the usual gluing lemma for subharmonic functions to reach that $w \circ \theta$ is subharmonic on $\De.$
The proof is complete.
\end{proof}
The following fact about plurisubharmonicity of upper envelopes of plurisubharmonic functions on complex varieties which may not be locally irreducible is needed
in the proof of Theorem 1.3.
\begin{lemma} \label{lm3}
Let $V$ be a complex variety of a bounded domain $D \subset \C^n$.
Assume that there exists $\psi \in PSH^{-} (V), \psi \not \equiv -\infty$ satisfying the conditions (i) and (ii) in Theorem \ref{thm1}.
Then for every upper semicontinuous function $\va: V \to [-\infty, 0)$  we have $v=v^* \in PSH^{-} (V \setminus F)$, where
$$v(z):= \sup\{u (z): u \in PSH^{-} (V), u \le \va  \  \text{on}\  V\}, \ z \in V.$$
\end{lemma}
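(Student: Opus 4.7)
The plan is first to show that the upper semicontinuous regularisation $v^*$ is plurisubharmonic on $V\setminus F$, and then to exploit $\psi$ as an auxiliary weight in order to deduce $v=v^*$ on $V\setminus F$.

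Since $\va$ is upper semicontinuous and every $u$ in the defining family $\F:=\{u\in PSH^{-}(V):u\le\va\}$ is dominated by $\va$, one has $v\le\va$ and hence $v^*\le\va^*=\va<0$ on $V$. On the open set $V\setminus F$, which is locally irreducible by (ii), the restrictions $\{u|_{V\setminus F}:u\in\F\}$ form a family of plurisubharmonic functions locally uniformly bounded above. The standard Choquet-type regularisation theorem for plurisubharmonic functions on locally irreducible analytic sets (a direct consequence of the Fornaess--Narasimhan criterion) therefore gives $v^*\in PSH(V\setminus F)$, and since $v^*\le\va<0$ one in fact has $v^*\in PSH^{-}(V\setminus F)$.

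For the key step, fix $\ep>0$ and set
$$w_\ep:=v^*+\ep\psi,$$
with the convention $w_\ep\equiv-\infty$ on $F$. Because $v^*\le 0$ on $V\setminus F$ while $\psi(z)\to-\infty$ as $z\to z_0\in F$ by the upper semicontinuity of $\psi$, the function $w_\ep$ is upper semicontinuous on all of $V$. To verify plurisubharmonicity via Fornaess--Narasimhan, let $\theta:\De\to V$ be holomorphic. Either $\theta(\De)\subset F$, in which case $w_\ep\circ\theta\equiv-\infty$ is subharmonic, or $\psi\circ\theta\not\equiv-\infty$ on $\De$, and then hypothesis (i) forces $\theta^{-1}(F)\subset\{\psi\circ\theta=-\infty\}$ to be a polar subset of $\De$. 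On $\De\setminus\theta^{-1}(F)$ the composition $w_\ep\circ\theta$ coincides with the subharmonic function $v^*\circ\theta+\ep\,\psi\circ\theta$, which is bounded above by $0$ and tends to $-\infty$ at every point of $\theta^{-1}(F)$; the classical extension theorem for subharmonic functions across polar sets then produces a subharmonic extension of $w_\ep\circ\theta$ to all of $\De$. Hence $w_\ep\in PSH^{-}(V)$, and since $w_\ep\le v^*\le\va$ we have $w_\ep\in\F$, so $w_\ep\le v$ on $V$.

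Restricting to $V\setminus F$, where $\psi$ is finite, this reads $v^*-v\le-\ep\psi$; letting $\ep\to 0^+$ gives $v^*\le v$ there, and combined with the trivial inequality $v\le v^*$ one concludes $v=v^*\in PSH^{-}(V\setminus F)$. The most delicate point of the argument is the subharmonicity check for $w_\ep\circ\theta$ across the exceptional set $\theta^{-1}(F)$: it is precisely the existence of the global \psh\ weight $\psi$ with $\{\psi=-\infty\}\supset F$ that forces $\theta^{-1}(F)$ to be either all of $\De$ or a polar subset, and thereby permits the subharmonic extension that makes $w_\ep$ admissible in $\F$.
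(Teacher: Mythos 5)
Your argument is essentially the paper's own proof: the same competitor $v^*+\ep\psi$, extended by $-\infty$ across $F$, is shown to be plurisubharmonic via the Fornaess--Narasimhan disc criterion (your detour through polarity of $\theta^{-1}(F)$ and the removable-singularity theorem for subharmonic functions is fine, though the sub-mean-value inequality is trivially satisfied where the function is $-\infty$, so polarity is not even needed), then placed in the defining family to get $v^*+\ep\psi\le v$, and finally $\ep\downarrow 0$ where $\psi$ is finite. The one point to tighten is your opening step: the plurisubharmonicity of $v^*$ on the locally irreducible variety $V\setminus F$ is \emph{not} ``a direct consequence of the Fornaess--Narasimhan criterion'' (the upper regularization on the variety need not commute with composition by analytic discs at singular points, which is precisely why local irreducibility is required and why the envelope can fail to regularize to a \psh\ function in general, cf.\ Example~1.4 in [Ze]); the paper instead observes that $v^*$ is \psh\ on the regular locus and then extends across the singular set by Demailly's removable-singularity theorem (Theorem~1.7 in [De]), and that is the citation you should use.
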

\begin{proof}
First, by the assumption on $\va$  we have $v<0$ on $V.$
This implies that $v^*$ is negative and plurisubharmonic on the regular part of $V.$
In view of (ii), we may apply the theorem on removable singularities for \psh\ functions (cf. Theorem 1.7 in [De])
 to conclude that $v^* \in PSH^{-} (V \setminus F)$.  Moreover, since $\va$ is upper semicontinuous, $v^* \le \va$ on $V.$

Next, for $\ve>0$ we set
$$v_\ve:=\begin{cases}
v^*+\ve \psi  & \ \text{on}\ \ V \setminus F\\
-\infty&  \text{on}\  F.
\end{cases}$$
Then $v_\ve \le \va<0$ and upper semicontinuous on $V.$ Moreover, by (i), for any holomorphic mapping $\theta: \De \to V$, the composition map $v_{\ve} \circ \theta$ is
subharmonic on the open set $\De \setminus \theta^{-1} (F)$ and equal to $-\infty$ on $\theta^{-1} (F)$.
It follows that $v_\ve \circ \theta$ is subharmonic entirely on $\De.$
So we may apply Fornaess-Narasimhan's criterion to conclude that $v_\ve \in PSH^{-} (V).$
Therefore, $v_\ve \le v$ on $V$. By letting $\ve \downarrow 0$ we obtain $v^* \le v$ on $V \setminus F.$ This finishes the proof
since the reverse inequality is clear.
\end{proof}
Now we turn to some basic notions of pluripotential theory on complex varieties which are involved in the statement of Theorem \ref{thm1}.
Let $V$ be a complex variety of pure dimension $k$ in a bounded domain $D \subset \mathbb C^n.$
According to Bedford in [Be] (see also [De] and [Ze]), the complex Monge-Amp\`ere operator
$$(dd^c )^k: PSH(V) \cap L^{\infty}_{\text{loc}} (V) \to M_{n, n} (V),$$
where $M_{n ,n} (V)$ denotes Radon measures on $V,$ may be defined in the usual way on the regular locus $V_r$ of $V$ (cf. [Kl] p.113), and it extends "by zero" through the singular locus $V_s$ i.e., for Borel sets
$E \subset V$
$$\int_E (dd\psi)^k: =\int_{E \cap V_r} (dd^c \psi)^k, \ \forall \psi \in PSH(V) \cap L^{\infty}_{\text{loc}} (V).$$
Following Bedford and Taylor (cf. Theorem 4.4.2 in [Kl]), this operator can be used to characterize maximality of locally bounded \psh\  functions on smooth complex varieties.
Moreover, we may use $(dd^c)^k$ to identify pluripolar subsets of $V$.
Recall that,  $X \subset V$ is called  {\it pluripolar} if for every $a \in X$ there exists a \nhd\  $U$ of $a$ in $V$ and $v \in PSH (U)$ such that $v \not \equiv -\infty$ and $v|_{X \cap U}=-\infty.$
For instance, the singular locus $V_s$ of $V$ is pluripolar (in $V$) being a {\it proper} complex subvariety of $V$.
It is well known that $(dd^c \psi)^k$ does not charge (Borel) pluripolar sets for locally bounded \psh\  functions $\psi.$
A major problem in pluripotential theory is to decide when a (locally) pluripolar set is globally pluripolar i.e., there exists $v \in PSH(V), v \not\equiv-\infty$ such that
$v|_V \equiv -\infty.$
Using again the operator $(dd^c)^k,$ Bedford proved that every locally pluripolar subset of $V$ is globally pluripolar provided that $V$ is Stein (cf. Theorem 5.3 in [Be]).
Note that in the case where $V$ is an open subset of $\mathbb C^n,$ this statement is a celebrated theorem of Josefson.

Our final ingredients consists of a few standard facts about upper semicontinuous and lower semicontinuous functions on compact sets of $\C^n$.
First, we have an elementary yet useful result of Choquet (cf. Lemma 2.3.4 in [Kl]).
\n
\begin{lemma}\label{lmC}
Let $\{u_\al\}_{\al \in \mathcal {A}}$ be a family of upper semicontinuous functions on $\ov{V}$ which is locally bounded from above. Then there exists a countable subfamily $\mathcal {B}$ of $\mathcal A$
such that
$$(\sup \{u_\al: \al \in \mathcal {B}\})^*=(\sup\{u_\al: \al \in \mathcal {A}\})^*.$$
 If $u_\al$ are lower semicontinuous then $\mathcal {B}$ can be chosen so that
 $$\sup \{u_\al: \al \in \mathcal {B}\}=\sup\{u_\al: \al \in \mathcal{A}\}.$$
\end{lemma}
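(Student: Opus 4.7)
The plan is to exploit second countability of the compact metric space $\ov V$ (which is closed inside the bounded domain $D$, hence compact). The upper semicontinuous case will be handled by picking, for each element of a countable open basis, a sequence of indices that saturate the supremum on that basis element; the lower semicontinuous case will be handled by a direct Lindel\"of extraction on the open super-level sets.

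For the upper semicontinuous statement, fix a countable basis $\{B_k\}_{k \ge 1}$ of open subsets of $\ov V$. Writing $u:=\sup_{\al \in \mathcal A} u_\al$, local boundedness combined with compactness yields $m_k:=\sup_{B_k} u = \sup_\al \sup_{B_k} u_\al<\infty$. For each $k$ choose $\al_{k,j} \in \mathcal A$ with $\sup_{B_k} u_{\al_{k,j}} > m_k-1/j$, and let $\mathcal B:=\{\al_{k,j}: k,j \ge 1\}$. Setting $v:= \sup_{\al \in \mathcal B} u_\al$, one gets $v \le u$ and, by construction, $\sup_{B_k} v = m_k = \sup_{B_k} u$ for every $k$. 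Since $\{B_k\}$ is a basis, for any $z \in \ov V$ one has
$$
u^*(z) = \inf\{\sup_{B_k} u: z \in B_k\} = \inf\{\sup_{B_k} v : z \in B_k\} = v^*(z),
$$
which is the desired equality.

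For the lower semicontinuous statement, lower semicontinuity of each $u_\al$ makes $\{u_\al>q\}$ open for every rational $q$, and therefore
$$
\{u>q\}=\bigcup_{\al \in \mathcal A} \{u_\al>q\}
$$
is an open subset of the Lindel\"of space $\ov V$. Hence for each $q \in \mathbb Q$ one can extract a countable subcover, i.e.\ a countable $\mathcal B_q \subset \mathcal A$ with $\{u>q\}=\bigcup_{\al \in \mathcal B_q}\{u_\al>q\}$. Taking $\mathcal B:=\bigcup_{q \in \mathbb Q}\mathcal B_q$ (still countable) and $v:=\sup_{\mathcal B}u_\al$, one has $v \le u$ trivially; conversely, for any $z$ and any rational $q<u(z)$ there is $\al \in \mathcal B_q$ with $u_\al(z)>q$, so $v(z)>q$, and letting $q \uparrow u(z)$ gives $v(z) \ge u(z)$.

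The only potential wrinkle is verifying that the upper semicontinuous regularization is actually computed along any neighborhood basis at $z$, which is immediate from the definition but worth stating; apart from that, the proof is essentially bookkeeping once the second countability of $\ov V$ is invoked. I do not expect any serious obstacle.
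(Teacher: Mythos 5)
Your proof is correct. The paper does not actually prove this lemma --- it is quoted as Choquet's topological lemma with a citation to [Kl, Lemma 2.3.4] --- and your argument is precisely the standard one behind that reference: a countable basis $\{B_k\}$ of the compact (hence second countable) space $\ov V$, indices chosen to saturate $\sup_{B_k}u$ for the upper semicontinuous regularization (using that $u^*(z)=\inf\{\sup_{B_k}u:z\in B_k\}$), and a Lindel\"of extraction on the open super-level sets $\{u_\al>q\}$, $q\in\mathbb Q$, for the lower semicontinuous case. The only point worth a word is the degenerate case $m_k=-\infty$, where no index need be chosen for that $k$ since $\sup_{B_k}v=-\infty=m_k$ automatically; with that remark the argument is complete.
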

The next two simple lemmas deal with sequences of upper and lower semicontinuous on compact sets of $\C^n$.
\begin{lemma}\label{lm1}
 Let $\{ f_j\}_{j \ge 1}$ is a decreasing sequence of upper semicontinuous functions defined on a compact $K \subset \C^n$ and $g$ be a lower semicontinuous continuous  function  on $K$ such that
 $$\lim_{j \to \infty} f_j (x) \le g(x),  \forall  x \in K.$$ Then for every
$\varepsilon  > 0$ there exists  $j_0$  such that  if $j \ge j_0$ then
$$f_j ( x)<g( x) +\varepsilon,\ \forall  x \in  K.$$
\end{lemma}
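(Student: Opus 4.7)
This is a Dini-type statement (one-sided, with $g$ replacing a constant), and the natural route is a compactness argument after finding, for each point, a single index that does the job in a neighborhood.

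The plan is as follows. Fix $\varepsilon>0$. For each $x\in K$ I would first choose an index $j(x)$ so that
$$f_{j(x)}(x) < g(x) + \tfrac{\varepsilon}{2}.$$
This is possible because by hypothesis $\lim_{j\to\infty} f_j(x)\le g(x)$ and the sequence $\{f_j(x)\}$ is decreasing; if $g(x)=+\infty$ any index works (recalling that any upper semicontinuous function on a compact set is bounded above, so $f_j(x)<+\infty$ always), while if $g(x)\in\mathbb{R}$ the inequality follows from the definition of $\limsup$.

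Next I would pass from a pointwise inequality to a local one. The function $h_x:=f_{j(x)}-g$ is upper semicontinuous on $K$: indeed $f_{j(x)}$ is u.s.c. and, since $g$ is l.s.c., $-g$ is also u.s.c., and the sum of two u.s.c. functions (with the convention that $g$ takes values in $(-\infty,+\infty]$, which is forced by l.s.c.\ on a compact set) is again u.s.c. Consequently the set
$$U_x:=\{y\in K: f_{j(x)}(y)<g(y)+\varepsilon\}$$
is (relatively) open in $K$, and $x\in U_x$ because of the $\varepsilon/2$ slack. By compactness of $K$ I extract a finite subcover $U_{x_1},\dots,U_{x_N}$, and set
$$j_0:=\max\{j(x_1),\dots,j(x_N)\}.$$

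The conclusion now follows at once from monotonicity of $\{f_j\}$: given $j\ge j_0$ and $y\in K$, pick $i$ with $y\in U_{x_i}$; then $f_{j(x_i)}(y)<g(y)+\varepsilon$, and since $j\ge j_0\ge j(x_i)$ we have $f_j(y)\le f_{j(x_i)}(y)<g(y)+\varepsilon$, as required. The only subtle point is the bookkeeping with $\pm\infty$ values when forming $f_{j(x)}-g$ and writing $g(x)+\varepsilon$, but because $g$ is l.s.c.\ on the compact $K$ it is bounded below and is $+\infty$-valued only where the desired inequality is automatic, so the argument goes through cleanly and is not really a genuine obstacle.
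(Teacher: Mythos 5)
Your proof is correct and is essentially the paper's argument in dual form: the paper sets $K_j:=\{x\in K: f_j(x)-g(x)\ge\varepsilon\}$, notes these are nested compact sets (since $f_j-g$ is upper semicontinuous) with empty intersection, and concludes $K_{j_0}=\emptyset$ for large $j_0$, which is exactly your open-cover argument read through complements. The $\varepsilon/2$ slack you introduce is harmless but unnecessary, since $x\in U_x$ already follows from $f_{j(x)}(x)<g(x)+\varepsilon$.
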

\begin{proof}
For $j \ge 1$, we let  $K_j: =\{x \in K: f_j (x)-g(x) \ge \varepsilon \}.$
By the assumptions, we infer that $\{K_j\}_{j \ge 1}$ is a decreasing sequence of compact sets such that $\bigcap_{j \ge 1} K_j =\emptyset.$
Thus we can find an index $j_0 \ge 1$ such that $K_j=\emptyset$ for $j \ge j_0.$
This proves our lemma.
\end{proof}
\begin{lemma}\label{lm2}
Let $X$  be a subset  of  $\C^n$  and  $\{\va_j\}_{j \ge 1}$  be a  sequence  of  lower semicontinuous  functions  on  $X$  that  increases  to  a lower  semicontinuous  function  $\va$ on $X$.
Then  for every  sequence  $\{a_j\}_{j \ge 1} \subset  X$  with  $a_j  \to  a \in X$ we have
$$\va (a) \le \varliminf\limits_{j\to \infty} \va_j (a_j).$$
\end{lemma}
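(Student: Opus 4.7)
The plan is to fix an arbitrary real number $c<\va(a)$ and show that $c\le\varliminf_{j\to\infty}\va_j(a_j)$; letting $c\uparrow\va(a)$ then gives the conclusion, with the case $\va(a)=-\infty$ being trivial and the case $\va(a)=+\infty$ handled by taking $c$ arbitrarily large.

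To obtain $c\le\varliminf_{j\to\infty}\va_j(a_j)$, I would fuse three ingredients in order: monotone convergence at the single point $a$, lower semicontinuity of an early term of the sequence, and the convergence $a_j\to a$. First, since $\va_k(a)\uparrow\va(a)>c$, there exists an index $k_0$ with $\va_{k_0}(a)>c$. Next, since $\va_{k_0}$ is lower semicontinuous on $X$, the set $U:=\{x\in X:\va_{k_0}(x)>c\}$ is relatively open in $X$ and contains $a$, so I can pick a Euclidean ball $B$ around $a$ with $B\cap X\subset U$. Finally, since $a_j\to a$, there is $j_1$ such that $a_j\in B\cap X$ for $j\ge j_1$, giving $\va_{k_0}(a_j)>c$ for all such $j$. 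Then for every $j\ge\max(k_0,j_1)$, monotonicity of the sequence yields $\va_j(a_j)\ge\va_{k_0}(a_j)>c$, whence $\varliminf_{j\to\infty}\va_j(a_j)\ge c$.

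There is no real obstacle here — the argument is a standard $3\varepsilon$-style chase — so the only thing to watch is bookkeeping of the two indices $k_0$ and $j_1$ (one chosen from the convergence $\va_k(a)\uparrow\va(a)$, the other from $a_j\to a$), and making sure the monotonicity hypothesis is invoked only after $j\ge k_0$. I would remark in passing that the stated hypothesis that the pointwise limit $\va$ be lower semicontinuous is actually automatic: the pointwise supremum of any family of lower semicontinuous functions is lower semicontinuous, and the proof above uses only the lower semicontinuity of the individual $\va_j$ together with pointwise monotone convergence at $a$.
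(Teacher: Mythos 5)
Your proof is correct and is essentially the same argument as the paper's: the paper fixes $k$, uses $\va_k(a_j)\le\va_j(a_j)$ for $j\ge k$ together with lower semicontinuity of $\va_k$ at $a$ to get $\va_k(a)\le\varliminf_{j\to\infty}\va_j(a_j)$, and then lets $k\to\infty$, which is exactly your level-set argument with the threshold $c$ unpacked. Your closing remark that lower semicontinuity of the limit $\va$ is automatic (and unused) is also accurate.
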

\begin{proof} For $j \ge k$ we have  $\va_k (a_j) \le \va_j (a_j)$. By letting $j \to \infty$ and using lower semicontinuity of $\va_k$ at $a$ we obtain
$$\va_k (a) \le \varliminf\limits_{j\to \infty} \va_k (a_j) \le \varliminf\limits_{j\to \infty} \va_j (a_j).$$
The desired conclusion follows by letting $k \to \infty$ in the right hand side.
\end{proof}

\section{Proofs of the main results}
\begin{proof}( {\it of Proposition \ref{pro1}})  Obviously $J_z \subset J^c_z, \forall z \in V.$ Conversely,
fix $z \in E$ and $\mu \in J^c_z.$ For every $u \in PSH^{-} (V)$ we choose a sequence $\{u_j\}_{j \ge 1} \subset PSH^{-} (V)$ that satisfy the conditions (i) and (ii).
Then we have
$$u_j (z)\le \int_V u_j d\mu, \ \forall j \ge 1.$$
By letting $j \to \infty$ and making use of Fatou's lemma we get
$$u(z)\le \int_V ud\mu.$$
Thus $\mu \in J_z$ as desired.
\end{proof}
\begin{proof} ({\it of Theorem \ref{thm1}})
We will prove (a) and (b) simultaneously.
 Fix $u \in PSH^{-} (V)$.
We now follow closely the arguments in Theorem 3.1 of  [DW].
Choose a sequence $\{\va_j\}_{j \ge 1} \subset \mathcal C (\ov V)$ with $\va_j \downarrow u^*$ on $\ov V.$
For every $j \ge 1$ we define $S \va_j$ and $S^c \va_j$ as follows
\begin{equation}\label{envelop}
\begin{aligned}
&S\va_j (z):=  \sup\{u (z): u \in PSH^{-}(V), u^* \le \varphi_j \ \ \text{ on}\ \ \ov V\}, \  z \in V, \\
&S^c \va_j (z):=  \sup\{u (z): u \in PSH^{-}_c (V), u \le  \varphi_j \ \ \text{ on}\ \ \ov V\}, z \in \ov{V}.
\end{aligned}
\end{equation}
Then from Edwards' theorem and the assumption that $J_z=J^c_z$ for every $z \in V \setminus E$
we obtain
$$S^c \va_j =S\va_j \  \text{on}\   V \setminus E.$$
Since $\va_j \in \mathcal C(\ov V)$, we infer that $(S\va_j)^* \le \va_j$. Under the assumptions (i) and (ii), we may apply Lemma \ref{lm3} to obtain
$$u_j:=S\va_j=(S\va_j)^* \in PSH^{-} (V \setminus F) \ \forall j \ge 1.$$
On the other hand, since $S^c \va_j$ is lower semicontinuous on $V$ we deduce that $(S \va_j)^*$ is continuous at every point in $V \setminus (E \cup F).$
Furthermore, we observe that $u \le S \va_j  \le \va_j$ on $V$ for every $j.$ Therefore $u_j=S\va_j \downarrow u$ on $V$.
Thus we get the assertion (a) of the theorem.

Next, we let $\{K_j\}_{j \ge 1}$ be an exhaustion of $V':=V \setminus ( \ov{E} \cup F)$ by compact subsets. For every $j \ge 1,$ by Choquet's topological lemma \ref{lmC}, we can find a sequence
$\{v_{l, j}\}_{l \ge 1} \subset PSH^{-}_c (V)$ that increases to $S^c \va_j$ on $\ov {V}$. By Dini's theorem and continuity of $S \va_j$ on $V'$, the convergence is uniform on $K_j$ as $l \to \infty$.
Thus we can choose $v_{l(j), j} \in PSH^{-}_c (V)$ such that
$$\Vert S\va_j -v_{l(j), j} \Vert_{K_j} \le 1/j, v_{l(j), j} \le \va_j \  \text{on}\  \partial V.$$
It is easy to check that $v_j:=v_{l(j), j}$ converges pointwise to $u$ on $V'$ and
$$\varlimsup\limits_{j \to \infty} v_j^* \le \lim_{j \to \infty} \va_j=u^* \   \text{on}\  \ov V.$$
\n
(c)
 Under the additional assumptions on $V, E$ and $u$, we will first show that the above construction of $\{v_j\}_{j \ge 1}$ can be modified so that $v_j$ is  locally uniformly bounded on $V$
and that $v_j$ converges to $u$ {\it locally in capacity} on $V.$
The desired conclusion on weak $^*-$ convergence of Monge-Amp\`ere measures would then follows from a result of Xing (cf. Theorem 1 in [Xi]).

Return to the proof, we let $\va \in PSH (V), \va \not \equiv -\infty$ such that $\va|_{\ov E} =-\infty$. Let $\{V_j\}_{j \ge 1}$ be an exhaustion of $V$ by  relatively compact open subsets.
For $j \ge 1$ we define the open set $W_j:=\{z \in V: \va(z)<-j\}.$
Since $u$ is bounded from below on $V_j$ for every $j$, we infer that the sequence $\{\va'_j\}_{j \ge 1} \subset \mathcal C(\ov{V})$ defined by
$$\va'_j: =\max \{\va_j, \al_j\}, \ \text{where}\ \ \al_j =\inf_{V_j} u, \ \forall j \ge 1,$$
decreases to $u$ on $V$ as well. Define the envelopes $S^c \va'_j$ and $S \va'_j$ as in (\ref{envelop}). Then we have
$$S^c \va'_j=S\va_j  \ \text{on}\  V \setminus \ov{E}.$$
Choose a sequence $\{v_{l, j}\}_{l \ge 1} \subset PSH^{-}_c (V)$ that increases to $S^c \va_j$ on $\ov {V}$ such that $v_{l, j} \ge \al_j$ on $V_j$ for every $l \ge 1.$
By Dini's theorem and continuity of $S\va_j$ on $V \setminus \ov{E}$
we can choose $l(j) \ge 1$ such that
\begin{equation} \label{vj}
\Vert S \va'_j -v_{l(j), j} \Vert_{\ov{V_j } \setminus W_j} \le 1/j, v_{l(j), j} \le \va_j \  \text{on}\  \partial V.
\end{equation}
It is easy to check that the sequence $v_j:=v_{l(j), j}$ is locally uniformly bounded on $V$ and satisfies the conditions given in (b).
Next, fix $z_0 \in V$, we must show that there exists a small \nhd\ $U$ of $z_0$ such that
$(dd^c v_j)^k$ converges weakly to $(dd^c u)^k$ on $U$. Consider two cases.

\n
{\it Case 1.} $z_0 \in V_r$.  Choose a \nhd\ $U$ of $z_0$ in $V$ which is biholomorphic to an open subset of $\C^k.$
It suffices to show $(dd^c v_j)^k$ converges to $(dd^c u)^k$ on $U.$ For simplicity of exposition, we may assume  $U \subset \C^k.$
Fix $\ve>0$ and a relatively compact Borel subset $W$ of   $U.$ We claim that
\begin{equation} \label{cap}
\lim_{j \to \infty} C(\{z \in W: \vert v_j (z)-u(z)\vert>\ve\}, U)=0.
\end{equation}
Recall that for a Borel subset $X$ of $U$, the relative capacity (or Bedford-Taylor capacity)
$C(X, U)$ is defined as
$$C(X, U):= \sup \Big \{\int_X (dd^c v)^k: v \in PSH (U), -1<v<0\Big \}.$$
Using Bedford-Taylor's theorem on quasi-continuity of $u$ (cf. Theorem 3.5.5. in [Kl]), there exists a compact set $F \subset U$ such that $u|_{F}$ is continuous whereas $C(U \setminus F, U)<\ve.$
Choose $j_0$ so large that
$C(U \cap W_{j_0}, U)<\ve.$
Since $S\va'_j \downarrow u$ on $F \setminus W_{j_0} $ and since $S\va'_j$ is continuous on $F \setminus W_{j_0}$ for $j >j_0$, by Dini's theorem $S\va'_j$ converges uniformly to $u$ on $F \setminus W_{j_0}.$
Combining this with  (\ref{vj}) we see that if $j$ is sufficiently large then
$$\{z \in W: \vert v_j (z)-u(z)\vert>\ve\} \subset W \cap W_{j_0}.$$
The claim (\ref{cap}) follows from the choice of $j_0.$
By applying the above mentioned theorem of Xing to $\{v_j\}_{j \ge 1}, u$ and the open set $U$ we conclude that $(dd^c v_j)^k$ converges weakly to $(dd^c u)^k$ on $U$.

\n
{\it Case 2.} $z_0 \in V_s$. Let $U$ be a \nhd\ of $z_0$ which is relatively compact in $V.$ Since $\{v_j\}_{j \ge 1}$ is uniformly bounded on $U,$
the Chern-Levine-Nirenbeg's inequality (cf. Proposition 3.4.2 in [Kl]) implies that the measures $(dd^c v_j)^k$ has uniformly bounded masses on compact sets of $U \setminus V_s$.
On the other hand, by Lemma 3.1 in [Be], the set $U \cap V_s$ has outer capacity zero in $U.$ It follows that $(dd^c v_j)^k$ is uniformly bounded on compact sets of $U.$
Let $\mu \in M_{n, n} (U)$ be a cluster point of the sequence $\{(dd^c v_j)^k\}_{j \ge 1}$ in the weak $^*-$ topology. By the forgoing case, we know already that $\mu=(dd^c u)^k$ on $V_r \cap U.$
Moreover, the vanishing of the outer capacity of $U \cap V_s$ also yields that $\mu=(dd^c u)^k=0$ on $V_s \cap U.$
Therefore  $\mu=(dd^c u)^k$ on $U$. Thus $(dd^c v_j)^k$ converges to $(dd^c u)^k$ on $U$  in the weak $^*-$ topology.

The proof is thereby completed.
\end{proof}
\begin{proof} ({\it of Theorem \ref{thm2}})
(a) Obviously,  $ J_z \subset  J_z^c, \forall  z \in  V$.
 So it is enough to show the reverse conclusion.  Fix  $ z_0 \in V\setminus E$,  a measure
 $\mu \in J^c_{z_0}$ and  $u \in PSH^{-}(V)$. Since  $V$ is  Stein, by Fornaess-Narasimhan's approximation theorem there exist sequences  $\{u_j\}_{j \ge 1}, \{v_j\}_{j \ge 1} \subset PSH (V) \cap \mathcal C(V)$ with  $u_j \downarrow u$  and $v_j \downarrow v$ on  $V.$ Moreover, by the upper semicontinuity of  $u$  on  $\ov V$ we can  find  a  sequence
 $\{\va_j \}_{j \ge 1} \subset  \mathcal C(\ov  V)$  such that  $\va_j<0, \va_j  \downarrow u$ on  $\ov  V$.
 For  each $ j\ge  1$, we define the upper envelope
 $$S^c \va_j (z)= \sup \{u: u \in PSH^c (V), u \le  \va_j \ \text{on}\ \ov V\},  z \in \ov{V}.$$
Then
 \begin{equation}\label{eq1}
  S^c {\va_j}\le  \va_j  \ \text{ on}\ \     \ov V.
 \end{equation}
 By  Edward's  theorem \ref{thmEd}  and the hypothesis (ii)  we obtain
\begin{equation}\label{eq2}
S^c\va_j  = \va_j \  \text{on}\ \     (\partial  V) \setminus  K.
\end{equation}
 By Choquet's topological lemma \ref{lmC}, there exists  an increasing sequence
 $\{\va_{j,k}\}_{k \ge 1} \subset  PSH^c (V)$  such that
 \begin{equation}\label{eq3}
 \va_{j,k} \uparrow S^c \va_j \ \text{on}\ \ \ov V \ \text{ as} \ \  k \to \infty.
\end{equation}
Let $\{V_j\}_{j  \ge 1}$ be an exhaustion of $V$ by relatively compact  open subsets.
Fix $\ep >0$  and integers  $j \ge 1, p \ge 1$, we claim  that  there exist an index $k(j) > j$  such that for every $l \ge k(j)$ we have
\begin{equation}\label{eq4}
\va_{j, l} \ge  u  + \ep  v -\frac \ep2\ \text{on}\ \  \pa V_l.
 \end{equation}
 Indeed, suppose that  (\ref{eq4}) is false. Then  there  exists  a  sequence  $\{z_{k_l}\}_{l \ge 1} \subset V$ with $z_{k_l} \in \partial V_{k_l}$ such that
  $$\va_{j, k_l}(z_{k_l}) <u(z_{k_l})  + \ep  v(z_{k_l}) -\frac \ep2.$$
  By passing to a subsequence if necessary, we  can assume that  $z_{k_l} \to  z^* \in \pa  V$  as  $l \to  \infty$.
  In view of  Lemma  \ref{lm2} and the upper semicontinuity  of $u$  we get
  \begin{equation}\label{eq5}
S^c {\va_j} (z^*) \le  u(z^*) + \ep v^* (z^*) - \frac\ep2.
 \end{equation}
 Note that  the left hand side of  (\ref{eq5}) is bounded  from below by   $\inf_{\pa  V} \va_j > -\infty$, so $ z^* \notin  K$ by the assumption  (i).  Then it follows  from  (\ref{eq2}) and (\ref{eq5}), that
 $$\va_j(z^*) \le  u(z^*) + \ep  v(z^*) - \frac\ep2 <  u(z^*).$$
This  contradicts the fact that $\va_j \downarrow u$. Hence the claim (\ref{eq4}) is proved.

Now, from (\ref{eq4}) and Lemma \ref{lm1} we infer  that  for every $l \ge k(j)$ there exists an index
$m(l)  \ge  k(j)$ such that
$$\va_{j, l} \ge   u_{m(l)} + \ep v_{m(l)} -\ep\ \ \text{on}\ \ \partial V_l.$$
Using Lemma \ref{gluing} we see that the  function
$$
\tilde{u}_{j, p, l}:=\begin{cases}
\max\{\va_{j, l}, u_{m(l)} +  \ep  v_{m (l)} -\ep\} & \ \text{on}\ \ V_{l}\\
\va_{j, l}&  \text{on}\ \ov V \setminus  V_{l}
\end{cases}$$
belongs  to $PSH^{c}(V).$ Furthermore, it follows from  (\ref{eq3}) that
 $$ \tilde{u}_{j,p,l} \to \theta_{j, p}:=
\begin{cases}
  \max\{S^c {\va_{j}}, u + \ep  v -\ep\} & \text{ on}\ \   V\\
 S^c {\va_j}  & \ \text{on}\ \ \pa V
\end{cases}
$$
 as  $l \to  \infty$. In particular, since $v<0$ on $V$, by (\ref{eq1}) we conclude that
\begin{equation}  \label{eq6}
u +\ep  v -\ep \le \theta_{j, p}  \ \text{on}\ V,  \theta_{j, p} \le \va_j  \   \text{on}\  \ov{V}. \end{equation}
 Using  $\mu  \in  J_{z_0}^c$  we  obtain
 $$ \tilde{u}_{i, p. l}(z_0) \le  \int\limits_{\ov V} \tilde{u}_{j, p, l} d\mu =  \int\limits_{ V} \tilde{u}_{j, p, l} d\mu + \int\limits_{\pa V} \tilde{u}_{j, p, l} d\mu. $$
  By letting  $l \to \infty$,  using  Fatou's lemma, by  (\ref{eq6})  and  noting that  $ v< 0$ on $V$  we obtain the following estimates
   \begin{equation} \label{eq7}
  u(z_0) + \ep  v(z_0) -\ep \le \theta_{j,\ep}(z_0) \le  \int\limits_{\ov V} \va_j d\mu.
  \end{equation}
  Letting   $j \to \infty$ and then  $\ep \to 0$  (taking into account that $v(z_0)>-\infty$)
applying Fatou's lemma again we get
  $$ u(z_0) \le   \int\limits_{ \ov V} ud\mu .$$
This means that $\mu  \in  J_{z_0},$ we are done.

\n
(b) Let $\{\va_j \}_{j \ge 1} \subset  \mathcal C(\ov  V)$ be the sequence chosen in (a). For each $j \ge 1$ we define the envelope
$$S \va_j (z)= \sup \{u(z): u \in PSH^{-} (V), u \le  \va_j \ \text{on}\ \ov V\},  z \in V.$$
It follows from (a) and the {\it proof} of Theorem \ref{thm1} (a) that  $u_j:= S\va_j \in PSH^{-} (V)$
and the sequence $u_j \downarrow u$ on $V \setminus E$ as $j \to \infty$. Moreover, $u_j$ is continuous at every point of $V \setminus E.$
For boundary behavior of $u_j$ we fix $\xi \in (\partial V) \setminus K.$
Then we have
$$\va_j (\xi) =S^c \va_j (\xi) \le  \varliminf\limits_{z \to \xi, z \in V} S^c \va_j (z) \le  \varliminf\limits_{z \to \xi, z \in V} u_j (z).$$
Therefore $\lim_{z \to \xi} u_j (z)=\va_j (\xi).$ Thus $u_j$ is continuous at every point of $U:=\ov V \setminus (K \cup E)$ and
$u_j ^* \downarrow u^*$ on $U.$
The proof is complete.
\end{proof}
For the proof of the next theorem we require the following fact.
\begin{lemma} \label{alpha}
Let $V$ be a complex variety in a bounded domain $D \subset \mathbb C^n$ and $\xi \in \partial V$ be a boundary point.
Assume that there is a local continuous \psh\  barrier at $\xi.$
Then for every sequence $\{\va_j\}_{j \ge 1} \subset \mathcal C(\partial V), \va_j<0$ that decreases to an upper semicontinuous function $\va$ on $\partial V$
and every sequence $\{\xi_j\} \subset V$ with $\xi_j \to \xi$  we have
$$\varlimsup\limits_{j \to \infty} S^c \va_j (\xi_j)  \le  \va (\xi),$$
where $S^c \va_j (z):= \sup\{u(z): u \in PSH^{-}_c (V), u \le \va_j \ \text{on}\  \partial V\}, z \in \ov{V}.$
\end{lemma}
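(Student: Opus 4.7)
The plan is to lift the local continuous \psh\ barrier at $\xi$ to a global barrier $\tilde u \in PSH^{-}_c(V)$ that vanishes precisely at $\xi$, then pair it with the upper semicontinuity of $\va$ and Lemma \ref{lm1} to dominate every competitor in the envelope $S^c \va_j$ by a function whose value at $\xi_j$ tends to $\va(\xi)$. Specifically, following the gluing construction recalled in Remark (a) after Theorem \ref{thm2}, the local barrier is truncated at some $-\de_0$ outside a small ball and pasted via Lemma \ref{gluing} to produce a global $\tilde u \in PSH^{-}_c(V)$ with $\tilde u(\xi) = 0$ and $\tilde u < 0$ on $\ov V \setminus \{\xi\}$.

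Next, fix $M$ with $\va(\xi) < M < 0$ (valid because $\va \le \va_1 < 0$; the case $\va(\xi) = -\infty$ is handled by letting $M \to -\infty$ at the end). Upper semicontinuity of $\va$ at $\xi$ yields $\eta > 0$ such that $\va < M$ on the compact set $K_\eta := \pa V \cap \ov{\B}(\xi, \eta)$. Applying Lemma \ref{lm1} to the decreasing sequence of upper semicontinuous functions $\{\va_j\}$ on $K_\eta$, with the constant function $M$ playing the role of the lower semicontinuous majorant, produces an index $j_0$ such that $\va_j < M$ on $K_\eta$ for every $j \ge j_0$. Set $\de := \inf\{-\tilde u(z): z \in \pa V \setminus \B(\xi, \eta)\}$, which is strictly positive by continuity of $\tilde u$, the inequality $\tilde u < 0$ on $\ov V \setminus \{\xi\}$, and compactness of $\pa V \setminus \B(\xi, \eta)$.

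Put $A := -M/\de > 0$. For every $w \in PSH^{-}_c(V)$ with $w \le \va_j$ on $\pa V$ and every $j \ge j_0$, the sum $h := w + A\tilde u$ lies in $PSH(V) \cap \mathcal C(\ov V)$. On $K_\eta$ one has $h \le w \le \va_j < M$; on $\pa V \setminus \B(\xi, \eta)$, using $w \le 0$ and $A\tilde u \le -A\de = M$, one has $h \le M$. Hence $h \le M$ on all of $\pa V$. The maximum principle for \psh\ functions on the bounded complex variety $V$ (which follows from the Fornaess-Narasimhan criterion by locally extending $h$ to a \psh\ function on an open set in $\C^n$ and applying the classical maximum principle, component by component) then forces $h \le M$ throughout $V$, that is, $w(z) \le M - A\tilde u(z)$ for every $z \in V$. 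Taking the supremum over such $w$ yields the uniform bound $S^c \va_j(z) \le M - A\tilde u(z)$ for all $z \in V$ and all $j \ge j_0$.

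Evaluating at $z = \xi_j$ and using $\xi_j \to \xi$ together with the continuity of $\tilde u$ at $\xi$ (with $\tilde u(\xi) = 0$), we obtain $\varlimsup_{j \to \infty} S^c \va_j(\xi_j) \le M$, and letting $M \downarrow \va(\xi)$ finishes the argument. The only delicate step is the invocation of the maximum principle on the possibly singular variety $V$; this is standard once one localizes via Fornaess-Narasimhan and exploits the connectedness of the irreducible components of $V$, but it is the one place where one must verify that classical tools transfer verbatim from domains in $\C^n$ to the singular setting.
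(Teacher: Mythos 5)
Your proof is correct in substance but takes a genuinely different route from the paper's. Both arguments begin the same way, truncating the local barrier and gluing via Lemma \ref{gluing} to get a global $\tilde u \in PSH^{-}_c(V)$ with $\tilde u(\xi)=0$ and $\tilde u<0$ elsewhere on $\ov V$. The paper then invokes Edwards' duality (Theorem \ref{thmEd}) to represent $S^c\va_j(\xi_j)$ as an infimum of $\int \va_j\,d\mu$ over Jensen measures $\mu_j\in J^c_{\xi_j}$ supported on $\pa V$, and uses $\tilde u$ only to show that $\mu_j\to\de_\xi$ in the weak-$*$ topology, whence $\varlimsup_j\int\va_j\,d\mu_j\le\va(\xi)$; the inequality $w(\xi_j)\le\int w\,d\mu_j$ for each competitor $w$ is then definitional, so no maximum principle is needed. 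You instead use $\tilde u$ quantitatively, dominating every competitor by $M-A\tilde u$ through a boundary comparison; this is more elementary (no duality theorem, no weak-$*$ compactness of $J^c_z$), but its cost is exactly the ingredient you flag: the maximum principle for $h\in PSH(V)\cap\mathcal C(\ov V)$ on a possibly singular bounded variety. That statement is true --- no positive-dimensional irreducible component of a closed subvariety of a bounded domain in $\C^n$ is compact, and the local maximum principle extends across $V_s$ via local parametrizations of $V$ as branched covers of polydiscs (or via Fornaess--Narasimhan discs, after a strictly plurisubharmonic perturbation $h+\ve(\vert z\vert^2-R^2)$) --- but it deserves a precise statement and reference rather than a gesture, since avoiding it is precisely what the Jensen-measure formulation buys the paper. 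Two small repairs to your write-up: applying Lemma \ref{lm1} with majorant $M$ only yields $\va_j<M+\ve$ on $K_\eta$, so either interpose $\va(\xi)<M'<M$ and apply the lemma with $g\equiv M'$ and $\ve=M-M'$, or carry the extra $\ve$ to the end where it is harmless; and note that $\eta$, $\de$ and $A$ all depend on $M$, which is fine because the limit in $j$ is taken before $M\downarrow\va(\xi)$.
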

\begin{proof} Let $u$ be a local continuous \psh\ barrier at $\xi.$
By the argument given in the remark following Theorem \ref{thm2}, we may extend $u$ to $\tilde u \in PSH^{-}_c (V)$  such that $\tilde u$ is a barrier at $\xi.$
Let $\{\mu_j\}_{j \ge 1}, \mu_j \in J_{z_j}$ be a sequence of Jensen measures with compact support
in $\partial V.$ We claim that $\mu_j$ converges to $\delta_\xi$ in the weak $^*-$ topology.
It suffices to show that any cluster point of this sequence coincides with $\delta_\xi$. Let
$\mu^*$ be such a cluster point. Then we have
$$0=\varliminf\limits_{j \to \infty} \tilde u(z_j) \le \varliminf\limits_{j \to \infty} \int_{ V} \tilde ud\mu_j \le \int_{V}  \tilde ud\mu^* \le 0.$$
So $\mu^* =\delta_\xi.$ The proves the claim.
It follows, since $\va_j \downarrow \va$ on $\partial V,$ that
$$\varlimsup\limits_{j \to \infty} S^c \va_j (z_j) \le \varlimsup\limits_{j \to \infty} \int_{\partial V} \va_j d\mu_j \le \lim_{j \to \infty} \int_{\partial V} \va_j d\delta_{\xi}=\va(\xi).$$
This is the desired conclusion.
\end{proof}
\begin{proof}  ({\it of Theorem \ref{thm3}})
We split the proof in two two parts.

\n
{\it Existence.} After subtracting a large constant we may assume $\va<0$ on $\partial V.$ Define the upper envelopes
$$\begin{aligned}
&S\va (z):=  \sup\{u (z): u \in PSH^{-}(V), u \le \varphi \ \ \text{ on}\ \ \partial V\}, \  z \in V; \\
&S^c \va (z):=  \sup\{u (z): u \in PSH^{-}_c (V), u^* \le  \varphi \ \ \text{ on}\ \  \partial V\}, z \in \ov{V}.
\end{aligned}$$
In view of the assumption (a) and the remark following Theorem \ref{thm2} we have $J^c_\xi=\{\de_\xi\}$ for every $\xi \in  ( \partial V) \setminus K.$
So by Edwards' duality theorem (with $\va:=+\infty$ on $V$) we obtain
\begin{equation}\label{i1}
S^c \va =\va \  \text{on}\ ( \partial V) \setminus K.
\end{equation}
Furthermore, since $V$ is Stein, by Theorem 1.4 (a) we get $J_z=J^c_z$ for every $z \in V.$ So using again Edwards' duality theorem as above
we get
\begin{equation}
S \va =S^c \va \ \text{on}\  V.
\end{equation}
Since $V$ is locally irreducible, $u:=(S^c \va)^* \in PSH^{-} (V).$ Moreover, by  the assumption (ii) we may apply Lemma \ref{alpha} to $\va_j=\va$
to obtain
\begin{equation} \label{i2}
\varlimsup\limits_{z \to \xi, z \in V} u(z) \le \va(\xi), \  \forall \xi \in (\partial V)\setminus K.
\end{equation}
Fix $\ve>0$ and set $u_\ve:= u+\ve v.$
Then we infer from the last inequality and the assumption that $(u_\ve)^* \le \va$ on $\partial V.$ This implies that $u_\ve \le S \va$ on $V.$
By letting $\ve \downarrow 0$ and noting that $v>-\infty$ on $V$ we get
$u=S \va=S^c \va$ on $V$. Hence $u$ is lower semicontinuous on $V$, so $u \in PSH(V) \cap \mathcal C(V)$  and $\Vert u\Vert_V \le \Vert \va\Vert_{\partial V}.$
Next we show that $u$ has the right boundary values. Indeed, fix $\xi \in (\partial V) \setminus K$ and a sequence $\xi_j \to \xi, \xi_j \in V.$
By lower semicontinuity on $\ov{V}$ of $S^c \va$ and (\ref{i1}), (\ref{i2}) we have
$$\va (\xi) =S^c \va(\xi) \le \varliminf\limits_{j \to \infty} u(\xi_j) \le  \varlimsup\limits_{j \to \infty} u(\xi_j) \le \va(\xi) .$$
It follows that $u(z) \to \va(\xi)$ as $z \to \xi, z \in V.$  Finally, we let $w \in PSH (V)$ with $w \le u$ on $V \setminus U$ for some open set $U$ relatively compact in $V.$
Then by the gluing lemma \ref{gluing}
$$\tilde u(z):= \begin{cases}
\max \{u(z),  w(z)\},  & z \in U \\
u(z) & z \in V \setminus U
\end{cases}$$
 belongs to $PSH (V)$. Moreover, $\tilde u$ is a member in the defining family for $S \va$. Therefore
 $\tilde u \le S \va =u$ on $V$. In particular, $w \le u$ on $U.$  This proves maximality of $u$ and also completes the proof of the existence of the solution.

 \n
 {\it Uniqueness.} Assume that there exist bounded continuous \psh\ functions $u_1, u_2$ on $V$ such that
 $$\lim_{z \to \xi, z \in V} u_1(z)=\lim_{z \to \xi, z \in V} u_2 (z)=v(\xi), \ \forall \xi \in (\partial V)\setminus K.$$
 Let $\{V_j\}$ be a sequence of relative compact open subset of $V$ with $V_j \uparrow V.$
Fix $\ve>0$, since $u_2$ is bounded from below on $V$
we can find $j_0 \ge 1$ so large such that
 $$u_1+\ve v \le u_2+\ve \ \text{on}\  V \setminus  V_{j _0}.$$
 It follows  from maximality of $u_2$ that
 $$u_1+\ve v \le u_2+\ve \ \text{on}\   V.$$
 By letting $\ve \downarrow 0,$ we infer that $u_1 \le u_2$ on $V$. Similarly we also get $u_2 \le u_1$ on $V$. Therefore $u_1=u_2$ on $V.$

 The theorem is proved.
 \end{proof}

\begin{proof} ({\it of Therem \ref{thm4}})
 For  each  $j \ge  1$, put
 $$V_j := \{ z \in  V:  \rho(z) < - \frac1{2j^2}\}.$$
 Then  $V_j$ is  a relative compact  open subset of $V, \forall j \ge  1$ and  $V_j \uparrow V$. By the upper semicontinuity  of $u^*$ on  $ \ov V$ there  exists  a  sequence   $\varphi_j \in  \mathcal C( \ov  V), \va_j<0$ such that  $\varphi_j  \downarrow  u^*$  on  $ \ov V$.
Fornaess-Narasimhan's approximation theorem yields a sequence $\{v_j\}_{j \ge 1} \subset  PSH(V)  \cap   \mathcal C(V)$ such that  $v_j \downarrow u$ on  $V$.
 Now for $j \ge 1$ we define the upper envelopes
 $$\begin{aligned}
&S\va_j (z):=  \sup\{u (z): u \in PSH^{-}(V), u^* \le \varphi_j \ \ \text{ on}\ \ \ov V\}, \  z \in V \\
&S^c \va_j (z):=  \sup\{u (z): u \in PSH^{-}_c (V), u \le  \varphi_j \ \ \text{ on}\ \  \ov V\}, z \in \ov{V}.
\end{aligned}$$
Using Edwards' duality theorem  and the assumption (i) we obtain
\begin{equation} \label{inq1}
S^c \va_j =\va_j \  \text{on}\  \partial V.
\end{equation}
Furthermore, by Theorem \ref{thm2} (a) we get $J_z=J^c_z$ for every $z \in V.$ So by applying again Edwards' duality theorem as above
we get
\begin{equation} \label{inq2}
u \le S \va_j =S^c \va_j  \ \text{on}\  V.
\end{equation}
Next, fix $j \ge 1$, in view of Choquet's topological lemma \ref{lmC} there is a sequence $\{\va_{j, l}\}_{l \ge 1} \subset PSH^{-}_c (V) $ that increases to $S^c \va_j$ on
$\ov{V}$.
So, using (\ref{inq1}), (\ref{inq2}) and Lemma \ref{lm1} we can find $k(j)$
such that
\begin{equation} \label{inq4}
u \le  \va_{j, k(j)} + \frac1{3j} \ \text{on}\  \ov{V_j}
\end{equation}
and
\begin{equation} \label{inq5}
\max \{u^*, \va_j-\frac1{j}\}\le \va_{j, k(j)} \le \va_j \ \text{on}\  \partial V.
\end{equation}
Hence, it follows from Lemma \ref{lm1} and (\ref{inq4})  that  there exists  $l(j) \ge j$  satisfying $$v_{l(j)} \le  \va_{j, k(j)} + \frac1{2j} \ \text{on}\   \ov V_j.$$
Now we consider the new function
\begin{equation*}
 \tilde{u}_j: =
 \begin{cases}
 \max\big \{j\rho +\va_{j, k(j)}, v_{l(j)}-\frac1j \big \}   & \text{on}\   V_j\\
 j\rho  + \va_{j, k(j)}  &\text{on}\  \ov V \setminus  V_j.
 \end{cases}
  \end{equation*}
Observe that,  on  $\partial  V_j$ we have
 $$v_{l(j)} - \frac1j = \big(v_{l(j)} - \frac1{2j}\big) - \frac1{2j}  \le  \va_{j, k(j)} - \frac1{ 2j} = j\rho   + \va_{j, k(j)}.$$
 Therefore, by the gluing lemma \ref{gluing} $ \tilde{u}_j \in PSH^c (V)$ and  $ \tilde{u}_j|_{\partial  V} =  \va_{j, k(j)}$. This implies that  $\tilde u_j \to u^*$ pointwise on $\ov V$ as $j \to \infty$.
 Furthermore, since $\rho<0$ on $V$ the following estimates hold on $\ov {V}$
 \begin{equation} \label{inq6}
 \tilde u_j  \le \va_{j, k(j)}+\frac1{j}.
 \end{equation}
  Fix $j\ge  1$, for  each  $p \ge  j$ we set
 $$h_{p,j}:=  \sup\limits_{ j\le m\le p} \tilde{u}_m\ \  \text{and}\ \  u_j   = \sup\limits_{j \le m} \tilde{u}_m.
 $$
It is then clear that  $h_{p,j} \in PSH^c (V)$ and  $h_{p,j} \uparrow  u_j$ on $\ov{V}$. Moreover, by the choice of $k(j)$ we have
$$\va_j-\frac1{j} \le \va_{j, k(j)} \le u_j \le \va_j  \ \text{on}\  \partial V.$$
This implies that $u_j \downarrow  u^*$ on  $\ov V.$
We claim that  $h_{p,j}$ is uniformly  convergent  to  $u_j$  on $\ov{V}$ as  $p \to  \infty$.
Assume  otherwise,  there  exist
$\ep > 0$ and  a  sequence   $\{z_p \} \subset \ov{V}$  satisfying
\begin{equation}\label{eqa}
 h_{p,j}(z_p) +\ep <u_j (z_p),\ \forall  p \ge  j.
 \end{equation}
Since $u_j$ is lower semicontinuous on $\ov{V},$ we may assume $\{z_p\}_{p \ge 1} \subset V.$
 By the definition of $u_j$ and since \ref{eqa}, there exists $m(p) > p$  such that
\begin{equation}\label{eqc}
 h_{p,j}(z_p)  + \frac\ep2<  \tilde{u}_{m(p)}( z_p) ,\ \forall  p \ge  j.
 \end{equation}
 After switching  to a subsequence if necessary, we  can assume that $z_p \to z^* \in \ov{V}.$
We consider two cases.

 \n
{\it Case 1.} $z^* \in V.$ Then we may find $p_0 \ge 1$ such that
$\{z_p\}_{p \ge 1} \cup  \{z^*\}  \subset V_{m(p_0)}.$
By the definition of $\tilde u_j$ and taking into account the fact that $\rho<0$ on $V$ we obtain
 $$\tilde{u}_{m(p)}= v_{l(m(p))} - \frac1{m(p)} \  \text{on}\   V_{m(p)},$$
 for all $p \ge p_0$ large enough.
 This implies that
 $$\begin{aligned}
 \varlimsup\limits_{p \to \infty}\tilde{u}_{m(p)} (z_p)
 & =  \varlimsup\limits_{ p \to \infty} v_{l(m(p))} (z_p)
 & \le  \varlimsup\limits_{ p \to \infty} v_p(z_p)  \le  u(z^*),
 \end{aligned}
 $$
 where  the last  inequality  follows from  Lemma \ref{lm2}.
 Moreover, the left hand side of (\ref{eqc}) is  bounded from below by  $\inf\limits_{\ov V} \tilde{u}_j + \frac\ep2>-\infty$, therefore
 $u(z^*) > -\infty$.

On the other hand, using Lemma \ref{lm2} again we obtain
$$\begin{aligned}
\varliminf\limits_{ p \to \infty} h_{p,j} (z_p)
& \ge  u_j(z^*) \ge   \varliminf\limits_{ p \to \infty} \tilde{u}_{m(p)}(z^*) =  \varliminf\limits_{ p \to \infty} (v_{l(m(p))}(z^*) - \frac1{m(p)}) = u(z^*).
\end{aligned}
$$
Thus, by letting $p \to \infty$ in (\ref{eqc}) and taking limsup in both sides we get a contradiction.

\n
{\it Case 2.} $z^* \in \pa V$. It follows from (\ref{eqc}) and (\ref{inq6}) that
$$\begin{aligned}
\tilde u_j (z_p)+\frac{\ep}2
&
\le  h_{p,j}(z_p) + \frac{\ep}2< \tilde u_{m(p)} (z_p)
\le \va_{m(p), k(m_p)}(z_p)+\frac1{m(p)} \\
&\le S^c \va_{m(p)} (z_p) +\frac1{m(p)}
 \le \va_{m(p)}(z_p) + \frac1{m(p)} \le \va_{p}(z_p) + \frac1{m(p)},\ \forall p \ge j.
\end{aligned}
$$
By taking limsup in both sides as $p \to \infty$ and noting that $u_j=\va_{j, k(j)}$ on $\partial V$ we obtain
$$\va_{j, k(j)} (z^*) +\frac{\ep}2=\tilde u_j (z^*) +\frac{\ep}2 \le  \varlimsup\limits_{ p \to \infty} \va_{p} (z_p) \le u^* (z^*).$$
Here the last inequality follows from Lemma 2.7 and the fact that  $\va_{p} \downarrow u^*$ on $\ov V.$
This is  a contradiction to (\ref{inq5}).

Thus, our claim is fully proved.
Since $h_{p,j} \in PSH^c(V)$, it follows from Fornaess-Narasimhan's criterion that $u_j \in PSH^{-}_c (V)$. The proof is complete.
\end{proof}
\vskip1cm
\cen {\bf References}

\n
[Be]  E. Bedford, {\it The operator $(dd^c)^n$ on complex spaces}, S\'eminaire d'Analyse Lelong-Skoda, Lecture Notes in Math., {\bf 919} (1981), 294-324.

\n
[Ch] E. M. Chirka, {\it Complex Analytic Sets,} Kluwer, Dordecht, 1989.

\n
[De] J.-P. Demailly, {\it Mesures de Monge-Amp\`ere et caract\'erisation g\'eom\'etrique des vari\'et\'es
alg\'ebriques affines}, M\'em. Soc. Math. France (N.S.) {\bf 19} (1985), 1-124.

\n
[Ed] D. A. Edwards, {\it Choquet boundary theory for certain spaces of lower semicontinuous
functions}, in Function Algebras (Proc. Internat. Symposium on Function Algebras,
Tulane Univ., 1965) (Scott-Foresman, Chicago, 1966), pp. 300-309.

\n
[DW]  N.Q. Dieu and F. Wikstr\"om, {\it Jensen measures and approximation of plurisubharmonic functions}, Michigan Math. J. {\bf 53} (2005) 529-544.

\n
[Di] Nguyen Quang Dieu, {\it Approximation of plurisubharmonic functions on bounded domains in $\mathbb C^n$}, Michigan Math. J. {\bf 54}, (2006) 697-711.

\n
[FN] J. E. Fornaess and R. Narasimhan, {\it The Levi problem on complex spaces with singularities},
Math. Ann. {\bf 248} (1980) 47-72.

\n
[Kl] M. Klimek, {\it Pluripotential Theory}, Oxford 1991.

\n
[Si] N. Sibony, {\it Une classe de domaines pseudoconvexes}, Duke Math. J. {\bf 55} (1987), 299-319.

\n
[Wik1]  F. Wikstr\"om, {\it Jensen measures and boundary values of plurisubharmonic functions,}
Ark. Mat. {\bf 39} (2001) 181-200.

\n
[Wik2] F.  Wikstr\"om, {\it The Dirichlet problem for maximal plurisubharmonic functions on analytic varieties in $\C^n$}, International Journal of Mathematics
{\bf 20}, No. 4 (2009) 521-528.

\n
[Ze] A. Zeriahi, {\it  Fonction de Green pluricomplexe \`a pole \`a l'infini sur un espace de Stein parabolique et applications,}
Math. Scand. {\bf 69} (1991), 89-126.
\end{document}